\newtheorem{thm}{Theorem}[section]
\newtheorem{lem}[thm]{Lemma}
\newtheorem{cor}[thm]{Corollary}
\newtheorem{thmA}{Theorem}
\newtheorem{corA}[thmA]{Corollary}
\theoremstyle{definition}
\theoremstyle{remark}
\numberwithin{equation}{section}
\newcommand{\NM}{\vartriangleleft}
\DeclareMathOperator{\I}{I_\pi}
\DeclareMathOperator{\Irr}{Irr}
\DeclareMathOperator{\IBr}{IBr}
\DeclareMathOperator{\CC}{\bf C}
\DeclareMathOperator{\NN}{\bf N}
\DeclareMathOperator{\OO}{\bf O}
\DeclareMathOperator{\dz}{dz}
\DeclareMathOperator{\XX}{X}
\begin{document}
\title{Weights for $\pi$-partial characters of $\pi$-separable groups}
\author{\sc Xuewu Chang
\and Ping Jin}
\address{School of Mathematical Sciences, Shanxi University, Taiyuan, 030006, China}
\email{changxuewu@sxu.edu.cn}
\email{jinping@sxu.edu.cn}
\date{}

\begin{abstract}
The aim of this paper is to confirm an inequality predicted by Isaacs and Navarro in 1995,
which asserts that for any $\pi'$-subgroup $Q$ of a $\pi$-separable group $G$, the number of $\pi'$-weights of $G$ with $Q$ as the first component always exceeds that of irreducible $\pi$-partial characters of $G$ with $Q$ as their vertex.
We also give some sufficient condition to guarantee that these two numbers are equal,
and thereby strengthen their main theorem on the $\pi$-version of the Alperin weight conjecture.
\end{abstract}

\keywords{weight; vertex; $\pi$-separable group; $\pi$-partial character}
\maketitle

\Large

\section{Introduction}
Let $p$ be a prime, and let $G$ be a finite group.
Recall that a {\bf $p$-weight} of $G$ is a pair $(Q,\tau)$, where $Q$ is a $p$-subgroup of $G$ and
$\tau\in\Irr(\NN_G(Q)/Q)$ has $p$-defect zero, i.e., whose degree has the same $p$-part as $|\NN_G(Q):Q|$.
The Alperin weight conjecture (often abbreviated AWC) asserts that
the number of $G$-conjugacy classes of $p$-weights coincides with the number of conjugacy classes of $p'$-elements of $G$.

The first complete published proof of the AWC for $p$-solvable groups was given by
Isaacs and Navarro \cite{IN1995} in 1995 by introducing many new techniques to the $\pi$-theory of characters of $\pi$-separable groups for a set $\pi$ of primes, and in fact, they established a $\pi$-version of this conjecture.
Specifically, they defined a {\bf $\pi'$-weight} of a $\pi$-separable group $G$ to be an ordered pair $(Q,\tau)$,
where $Q$ is a $\pi'$-subgroup of $G$ and $\tau\in\Irr(\NN_G(Q)/Q)$ has $p$-defect zero for every prime $p\in\pi'$,
and then proved the following result, which appears in \cite{IN1995} as Theorem A
and reduces to the $p$-solvable case of AWC by taking $\pi=p'$, the complement of the prime $p$.

\begin{thm}[Isaacs-Navarro]\label{IN1}
Let $G$ be a $\pi$-separable group, and assume that the Hall $\pi$-complements of $G$ are nilpotent.
Then the number of $G$-conjugacy classes of $\pi'$-weights is equal to the number of conjugacy classes of $\pi$-elements of $G$.
\end{thm}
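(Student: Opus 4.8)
The plan is to deduce the equality of two \emph{a priori} unrelated counts from a local statement attached to each $\pi'$-subgroup, and then to settle that local statement by induction on $|G|$. The first step is to invoke the foundational fact of the $\pi$-theory that $|\I(G)|$, the number of irreducible $\pi$-partial characters of $G$, equals the number of conjugacy classes of $\pi$-elements of $G$; this is the $\pi$-analogue of $|\IBr(G)|=k_{p'}(G)$ and specialises to it when $\pi=p'$. Hence the theorem is equivalent to the assertion that the number of $G$-conjugacy classes of $\pi'$-weights equals $|\I(G)|$, and the whole problem becomes one of matching $\pi'$-weights against irreducible $\pi$-partial characters.

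To organise this matching I would use vertices. By the $\B$--$\I$ correspondence, each $\phi\in\I(G)$ is the restriction to $\pi$-elements of a unique $\chi\in\B(G)$, and the nucleus of $\chi$ equips $\phi$ with a vertex $\vtx(\phi)$, a $\pi'$-subgroup well defined up to $G$-conjugacy. A $\pi'$-weight $(Q,\tau)$, in turn, records its first component $Q$, again a $\pi'$-subgroup. I would partition both sides by the $G$-class of $Q$ and thereby reduce the global equality to the local claim that, for every $\pi'$-subgroup $Q$, the number $m(Q)$ of $\phi\in\I(G)$ with $\vtx(\phi)$ conjugate to $Q$ equals the number $w(Q)$ of $\pi'$-weights with first component $Q$. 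Since $\NN_G(Q)$ acts trivially on $\Irr(\NN_G(Q)/Q)$, the latter count $w(Q)$ is simply the number of $\tau\in\Irr(\NN_G(Q)/Q)$ whose degree has full $\pi'$-part, i.e.\ the $\pi'$-defect-zero characters of $\NN_G(Q)/Q$.

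The heart of the proof is the local equality $m(Q)=w(Q)$, and this is where the nilpotency of the Hall $\pi$-complements is decisive. Arguing by induction on $|G|$, I would pass to $\NN_G(Q)$ and factor out $Q$ through a Clifford-theoretic reduction, aiming to biject the $\pi'$-defect-zero characters of $\NN_G(Q)/Q$ with the partial characters of $G$ whose vertex is exactly $Q$. Nilpotency of a Hall $\pi$-complement means it is the direct product of its Sylow subgroups, a property inherited by every subquotient that occurs; it both prevents the vertex from being strictly larger than $Q$ and upgrades the relevant local character correspondence (of Glauberman--Isaacs type, for the nilpotent $\pi'$-group acting on the local section) from an injection to an exact bijection.

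I expect the main obstacle to be exactly this upgrade: controlling the \emph{exact} vertex. Without the nilpotency hypothesis one can in general only produce an injection, giving $w(Q)\geq m(Q)$, the inequality isolated in the present paper, and the subtle point is to show that under nilpotency every $\pi'$-defect-zero character of $\NN_G(Q)/Q$ is accounted for by a $\pi$-partial character with vertex precisely $Q$, with none left over. Closing the induction will require tracking carefully how vertices, nuclei, and the $\B$--$\I$ correspondence behave under restriction to and induction from $\NN_G(Q)$, and checking that the nilpotency hypothesis survives in all the subquotients the argument produces.
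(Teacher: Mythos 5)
This statement is not proved in the paper at all: it is quoted verbatim from Isaacs--Navarro \cite{IN1995} (their Theorem A), and the present paper's own contributions are the inequality $|\I(G|Q)|\le|\I(\NN_G(Q)|Q)|$ and the strengthened local equality of Theorem \ref{thm:B}. So there is no in-paper proof to compare against; I can only assess your reconstruction on its own terms.

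Your global-to-local reduction is correct and is indeed how the result is obtained. The count $|\I(G)|$ equals the number of classes of $\pi$-elements (the irreducible $\pi$-partial characters form a basis for the class functions on $G^0$), every $\varphi\in\I(G)$ has a vertex well defined up to conjugacy, and the number of $G$-classes of weights with first component conjugate to $Q$ is $|\dz(\NN_G(Q)/Q)|=|\I(\NN_G(Q)|Q)|$ (Lemma \ref{6.28}; your observation that $\NN_G(Q)$ acts trivially on $\Irr(\NN_G(Q)/Q)$ is the right justification). Summing over conjugacy-class representatives of $\pi'$-subgroups reduces everything to the local equality $|\I(G|Q)|=|\I(\NN_G(Q)|Q)|$. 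Two points deserve sharpening. First, the passage from ``Hall $\pi$-complements are nilpotent'' to the local equality should be made explicit: by $\pi$-separability every $\pi'$-subgroup $X$ lies in some Hall $\pi'$-subgroup, hence is nilpotent, hence satisfies the normalizer condition $Q<\NN_X(Q)$ whenever $Q<X$; this is exactly the hypothesis of Theorem \ref{thm:B} (equivalently of Theorem \ref{IN2}), so the local equality follows for \emph{every} $\pi'$-subgroup $Q$ at once, with no further induction needed at the global level. Second, your description of where nilpotency bites (``prevents the vertex from being strictly larger than $Q$'') is not quite the mechanism: in the actual induction (Step 2 of the proof of Theorem \ref{thm}) the normalizer condition is used to show that when $\OO_{\pi'}(G)\not\le Q$, so that $\I(G|Q,\lambda)=\varnothing$, one also has $\OO_{\pi'}(\NN_G(Q))\not\le Q$ and hence the local count vanishes too; without this, the local side can strictly exceed the global side, which is precisely the content of the inequality in Theorem \ref{thm:A}. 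The heart of the matter --- the induction establishing the local equality, via Corollary \ref{cor}, the Dade--Puig--Lewis character-triple isomorphism above the Glauberman--Isaacs correspondence, and Theorem \ref{basic} --- remains a sketch in your write-up, but the architecture you propose for it is the correct one.
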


Furthermore, they defined the {\bf vertices} for every irreducible {\bf $\pi$-partial character} of a $\pi$-separable group $G$,
and showed that the set of vertices for an irreducible $\pi$-partial character of $G$ forms a single conjugacy class of $\pi'$-subgroups of $G$. We will review the relevant definitions and properties in the next section.
Now let $Q$ be a $\pi'$-subgroup of $G$, and write $\I(G|Q)$ for the set of irreducible $\pi$-partial characters of $G$
with vertex $Q$. It is easy to show that $|\I(\NN_G(Q)|Q)|$ is exactly the number of $\pi'$-weights of $G$ with first component $Q$; see Lemma 6.28 of \cite{I2018} or Lemma \ref{6.28} below.
Using the notion of vertices, Isaacs and Navarro obtained a strengthened version of Theorem \ref{IN1}, and they also weakened the nilpotency condition on $Q$; see Theorem C and Theorem 6.3 of \cite{IN1995}.

\begin{thm}[Isaacs-Navarro]\label{IN2}
Let $G$ be $\pi$-separable, and let $Q\le G$ be a solvable $\pi'$-subgroup with the property that
whenever $Q\le X<Y\le G$, where $Y$ is a $\pi'$-subgroup, we have $X<\NN_Y(X)$.
Then $|\I(G|Q)|=|\I(\NN_G(Q)|Q)|$.
\end{thm}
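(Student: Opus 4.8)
The plan is to argue by induction on $|G|$. If $Q\trianglelefteq G$ then $\NN_G(Q)=G$ and the asserted equality is a tautology, so assume $N:=\NN_G(Q)<G$. The backbone of the argument is to locate an intermediate subgroup $H$ with $N\le H<G$ together with a vertex-preserving bijection between $\I(G|Q)$ and $\I(H|Q)$. Two clean observations make this enough. First, the hypothesis on $Q$ is inherited by every subgroup $H$ with $Q\le H\le G$: if $Q\le X<Y\le H$ with $Y$ a $\pi'$-group, then $Y$ is also a $\pi'$-subgroup of $G$ and $\NN_Y(X)$ is computed inside $Y$ irrespective of the ambient group, so $X<\NN_Y(X)$ persists. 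Second, $N\le H$ forces $\NN_H(Q)=N\cap H=N$. Hence the inductive hypothesis applied to $H$ yields $|\I(H|Q)|=|\I(\NN_H(Q)|Q)|=|\I(N|Q)|$, and the bijection then gives $|\I(G|Q)|=|\I(N|Q)|$, as required.

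To manufacture $H$ (or, failing that, a suitable quotient) I would run the Clifford machinery of the $\B$-character theory relative to a minimal normal subgroup $L$ of $G$, which, $G$ being $\pi$-separable, is either a $\pi$-group or a $\pi'$-group. Fix an irreducible $\pi$-partial character $\theta$ of $L$ lying under the relevant characters of $G$, and let $T=G_\theta$ be its inertia group. When $T<G$, the Clifford correspondence for $\pi$-partial characters gives a vertex-preserving bijection from the irreducible $\pi$-partial characters of $T$ over $\theta$ onto those of $G$ over $\theta$; since $N$ normalizes $Q$ it permutes these constituents, so after replacing $\theta$ within its $N$-orbit we may take $N\le T$, and with $H=T$ this reduces the relevant part of the count to a proper subgroup where induction applies. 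The remaining, homogeneous case is where $\theta$ is forced to be $G$-invariant. If $L$ is a $\pi$-group I would pass to $G/L$ through a character-triple isomorphism: vertices are $\pi'$-subgroups and are untouched by this $\pi$-reduction, $\overline{Q}=QL/L\cong Q$, a Frattini argument gives $\NN_{G/L}(\overline{Q})=NL/L$, and the normalizer hypothesis transfers to $G/L$ because (by Schur--Zassenhaus) $\pi'$-subgroups of $G/L$ lift to $\pi'$-subgroups of $G$ above $Q$; the inductive hypothesis in $G/L$, matched with the same $\pi$-reduction applied to $N$, then closes this case.

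The genuinely difficult case, and the one for which the hypothesis on $Q$ is precisely designed, is the homogeneous case with $L$ a $\pi'$-group. Here $QL$ is a $\pi'$-subgroup of $G$, and if $L\not\le Q$ the hypothesis with $X=Q$ and $Y=QL$ gives $Q<\NN_{QL}(Q)=Q\,\NN_L(Q)$, so $Q$ is never self-normalizing in the $\pi'$-group lying just above it. This is exactly the leverage needed to push the vertex upward and extract either a proper reduction, via a Frattini argument relating $N$ to the stabilizer of $\theta$, or a strictly smaller group to which induction applies after $L$ is absorbed. The main obstacles I anticipate are twofold. The first, and technical heart of the matter, is to verify that the vertex -- defined through the nucleus of the associated $\B$-character -- is genuinely preserved under each Clifford and character-triple correspondence invoked above; this is where the detailed $\B$-machinery of \cite{IN1995} and \cite{I2018} must be marshalled. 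The second is bookkeeping: one must confirm that the normalizer-growth hypothesis is correctly inherited by the subgroups and quotients produced in the $\pi'$-homogeneous reduction, and that the reductions carried out on $G$ are matched step for step by the corresponding reductions on $N$, so that the two counts remain equal throughout the induction.
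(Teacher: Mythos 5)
Your skeleton (induction, Clifford theory over a minimal normal subgroup, separate treatment of the $\pi$- and $\pi'$-cases, and use of the normalizer-growth hypothesis against normal $\pi'$-subgroups) matches the shape of the paper's argument, which in fact proves the stronger Theorem~\ref{thm:B} by establishing the relative statement Theorem~\ref{thm}. But two of your steps hide genuine gaps. First, you cannot ``replace $\theta$ within its $N$-orbit'' to arrange $N=\NN_G(Q)\le G_\theta$: by Lemma~\ref{6.33}(a) the group $N$ permutes the relevant $Q$-invariant constituents transitively in a single orbit, but it need not fix any one of them, so in general there is no intermediate subgroup $H$ with $N\le H<G$ carrying a bijection $\I(G|Q)\to\I(H|Q)$; the Clifford correspondence only relates the \emph{relative} sets $\I(G_\tau|Q,\tau)$ and $\I(G|Q,\tau)$. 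The correct bookkeeping is to sum $|\I(G_\tau|Q,\tau)|$ over $N$-orbit representatives $\tau$ as in Corollary~\ref{cor} and to match each summand with $|\I(\NN_G(Q)_\tau|Q,\tau^*)|$. This forces the induction to be run on a statement relative to a character triple $(G,Z,\lambda)$ rather than on the absolute statement by $|G|$, which is precisely why the paper proves Theorem~\ref{thm} instead of Theorem~\ref{thm:B} directly.

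Second, and more seriously, your argument never actually crosses from $G$ to $\NN_G(Q)$. In the homogeneous case with $K=\OO_\pi(G)$ and $\tau\in\Irr(K)$ invariant, passing to $G/K$ (or centralizing $K$ by a character-triple isomorphism) does not identify $\NN_{G/K}(\bar Q)$ with anything built from $\NN_G(Q)$ in a way that preserves the count over $\tau$. The indispensable bridge is the Glauberman--Isaacs correspondence $\Irr_Q(K)\to\Irr(\CC_K(Q))$, $\tau\mapsto\tau^*$, together with the Dade--Puig--Lewis theorem (Lemma~\ref{DP}) that $(G_\tau,K,\tau)$ and $(\NN_G(Q)_{\tau^*},\CC_K(Q),\tau^*)$ are isomorphic character triples; combined with Lemma~\ref{6.30} on degrees and vertices this yields $|\I(G|Q,\tau)|=|\I(\NN_G(Q)|Q,\tau^*)|$ (Theorem~\ref{basic}), which is the only point where the two sides of the asserted equality are compared and the only point where the solvability of $Q$ (or Feit--Thompson) is used. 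This ingredient is entirely absent from your proposal. Your $\pi'$-case also stops short of a conclusion: the key facts are that every vertex contains $\OO_{\pi'}(G)$ (Lemma~\ref{3.10}(b)), so if $L\not\le Q$ then $\I(G|Q)=\varnothing$, while the hypothesis on $Q$ gives $Q<\NN_{LQ}(Q)=(L\cap N)Q$, whence $\OO_{\pi'}(N)\not\le Q$ and $\I(N|Q)=\varnothing$ as well; and if $L\le Q$ one passes cleanly to $G/L$ by Lemma~\ref{6.31}. Without the Glauberman--Isaacs/Dade--Puig input the induction cannot close.
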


The main purpose of this paper is to confirm an inequality predicted by Isaacs and Navarro
at the end of \cite{IN1995}, which asserts that for any $\pi'$-subgroup $Q$ of a $\pi$-separable group $G$,
the number of $\pi'$-weights of $G$ with $Q$ as the first component always exceeds that of irreducible $\pi$-partial characters of $G$ with $Q$ as their vertex or, equivalently, that $|\I(\NN_G(Q)|Q)|\ge |\I(G|Q)|$.


\begin{thmA}\label{thm:A}
Let $G$ be a $\pi$-separable group, and let $Q$ be a $\pi'$-subgroup of $G$. Then $$|\I(G|Q)|\le |\I(\NN_G(Q)|Q)|.$$
\end{thmA}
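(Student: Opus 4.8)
The plan is to construct an injection of $\I(G|Q)$ into $\I(\NN_G(Q)|Q)$ and to build it by induction on $|G|$, tracking vertices through the Clifford theory of $\pi$-partial characters. Write $H=\NN_G(Q)$, so that $Q\NM H$ always holds; by Lemma \ref{6.28} the target is counted by the $\pi'$-weights of $G$ with first component $Q$, and this is the concrete object we must inject into. If $H=G$ then $Q\NM G$ and the assertion is the trivial equality, so we may assume $H<G$ and $\I(G|Q)\neq\varnothing$. I would also dispose at the outset of any $Q$ meeting the normalizer-growth hypothesis of Theorem \ref{IN2}, for which equality already holds: the entire point is to remove that hypothesis, and the surjectivity that Isaacs and Navarro extracted from it is exactly what we give up, retaining only injectivity.

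For the inductive step I would fix a minimal normal subgroup $N$ of $G$. As $G$ is $\pi$-separable, $N$ is an elementary abelian $p$-group with either $p\in\pi$ or $p\in\pi'$, and the two cases behave quite differently for $\pi$-partial characters. Given $\phi\in\I(G|Q)$, choose an irreducible constituent $\theta\in\I(N)$ lying under $\phi$ and put $T=G_\theta$ for its inertia group. When $T<G$, the Clifford correspondence for $\pi$-partial characters is a bijection $\I(T|\theta)\to\I(G|\theta)$, $\psi\mapsto\psi^{G}$, and — this is the feature I would lean on — it preserves vertices up to conjugacy; after replacing the chosen vertex by a suitable conjugate, one arranges the correspondent $\psi$ of $\phi$ to lie in $\I(T|Q)$. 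Induction applied inside $T$ then supplies an injection $\I(T|Q)\hookrightarrow\I(\NN_T(Q)|Q)=\I(T\cap H|Q)$, and the remaining task is to reassemble these, over a transversal of the relevant $\theta$, into a single injection into $\I(H|Q)$.

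The two genuine obstacles are as follows. First, the reassembly demands matching the Clifford decomposition of $\I(G|Q)$ along $N$ with a corresponding decomposition of $\I(H|Q)$; but $H=\NN_G(Q)$ does not meet $N$ as transparently as one would wish, so one must work with $\NN_N(Q)$, with $N\cap H$, and with how $H$ permutes the $\theta$, and when $N$ is a $\pi'$-group the $\pi$-partial characters detect $N$ only through its action, making the bookkeeping there the most delicate. Second, and more seriously, the argument stalls precisely when $\theta$ is $G$-invariant, i.e. $T=G$, where induction on $|G|$ yields nothing. I expect this fully ramified case to be the heart of the matter, to be handled by passing to a character triple isomorphic to $(G,N,\theta)$ of more tractable type — using the $\pi$-special and ``magic'' isomorphisms available because $G$ is $\pi$-separable — so as to reduce the count of vertices over $\theta$ to a count in a quotient or smaller group where either the inductive hypothesis or Theorem \ref{IN2} applies.

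Once the injection is in place, Theorem \ref{thm:A} follows at once, and I would expect the same analysis to identify exactly when injectivity is also surjectivity, thereby yielding the promised sufficient condition for equality that strengthens Theorems \ref{IN1} and \ref{IN2}.
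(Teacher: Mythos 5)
Your outline has the right global shape (a Clifford-theoretic induction that only needs to produce an injection), and you have correctly located the two danger points, but the proposal does not actually contain the ideas that close them, and both are essential. The first gap is the ``reassembly''. The paper does not work over a minimal normal subgroup; after noting that $\I(G|Q)=\varnothing$ unless $\OO_{\pi'}(G)\le Q$ (which both disposes of the $\pi'$-layer case you call delicate and is the one place where the inequality can become strict) and passing to $G/\OO_{\pi'}(G)$, it decomposes over $K=\OO_\pi(G)$. The mechanism that matches the decomposition of $\I(G|Q)$ along $\Irr(K)$ with that of $\I(\NN_G(Q)|Q)$ along $\Irr(\CC_K(Q))$ is the Glauberman--Isaacs correspondence $\Irr_Q(K)\to\Irr(\CC_K(Q))$, upgraded by the Dade--Puig--Lewis theorem to an isomorphism of character triples $(G_\tau,K,\tau)\cong(\NN_G(Q)_{\tau^*},\CC_K(Q),\tau^*)$; this yields the exact local identity $|\I(G|Q,\tau)|=|\I(\NN_G(Q)|Q,\tau^*)|$ (the paper's Theorem \ref{basic}) that makes the two sums comparable term by term. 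Nothing in your sketch supplies a substitute for this correspondence, and without it there is no way to pair the $\NN_G(Q)$-orbits on the two sides.

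The second gap is the fully ramified case $T=G$, which you defer to an unspecified character-triple reduction. Induction on $|G|$ genuinely stalls there, and the paper's fix is structural: it proves a \emph{relative} statement $|\I(G|Q,\lambda)|\le|\I(\NN_G(QZ)|Q,\lambda)|$ for a normal $\pi$-subgroup $Z$ with $G$-invariant $\lambda\in\Irr(Z)$, and inducts on $|G:Z|$ rather than $|G|$. After centralizing $Z$ by a character-triple isomorphism and using Hall--Higman to force $Z<\OO_\pi(G)=K$, the inductive step replaces $Z$ by $K$ inside each $G_\tau$, so the parameter $|G_\tau:K|<|G:Z|$ drops even when $\tau$ is $G$-invariant. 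Note also that the relative target is $\NN_G(QZ)$, not $\NN_G(Q)$, which is exactly what the recursion produces ($K\NN_G(Q)_\tau=\NN_{G_\tau}(KQ)$); an induction phrased only for $\NN_G(Q)$ would not close. As written, your proposal is a plausible plan whose two acknowledged obstacles are the entire content of the theorem.
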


Our proof of Theorem \ref{thm:A} was inspired by the techniques developed by Navarro and Sambale in \cite{NS2021}.
Actually, they explored nilpotent $\pi'$-weights in $\pi$-separable groups
and gave a generalization of Theorem \ref{IN1}.

Using the same argument as in the proof of Theorem  \ref{thm:A},
we obtain a strong form of Theorem \ref{IN2}.

\begin{thmA}\label{thm:B}
Let $G$ be a $\pi$-separable group, and suppose that $Q\le G$ is a $\pi'$-subgroup
with the property that $Q<\NN_X(Q)$ for every $\pi'$-subgroup $X$ of $G$ that properly contains $Q$.
Then $$|\I(G|Q)|=|\I(\NN_G(Q)|Q)|.$$
\end{thmA}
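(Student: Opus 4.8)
The plan is to reuse the injection $\I(G|Q)\hookrightarrow\I(\NN_G(Q)|Q)$ produced in the proof of Theorem~\ref{thm:A}: since its injectivity is already in hand, it suffices to prove the reverse inequality $|\I(\NN_G(Q)|Q)|\le|\I(G|Q)|$, and I would do this by induction on $|G|$. Write $N=\NN_G(Q)$. If $N=G$ there is nothing to prove, so assume $N<G$. A first reduction disposes of the case $\OO_{\pi'}(N)>Q$: by Lemma~\ref{6.28} the members of $\I(N|Q)$ count the $\pi'$-weights $(Q,\tau)$ of $G$ with first component $Q$, and the requirement that $\tau$ have $p$-defect zero for every $p\in\pi'$ forces $\OO_{\pi'}(N/Q)=1$; hence $\OO_{\pi'}(N)>Q$ gives $\I(N|Q)=\emptyset$, and Theorem~\ref{thm:A} then makes both sides zero. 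So I may assume $\OO_{\pi'}(N)=Q$.

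The hypothesis now pins down the $\pi'$-core of $G$. Put $V=\OO_{\pi'}(G)$. Since $V\trianglelefteq G$, the subgroup $V\cap N=\NN_V(Q)$ is a normal $\pi'$-subgroup of $N$, so $\NN_V(Q)\le\OO_{\pi'}(N)=Q$. If $V\nleq Q$, then $VQ$ is a $\pi'$-subgroup properly containing $Q$, and because $V\trianglelefteq VQ$ a Dedekind argument gives $\NN_{VQ}(Q)=\NN_V(Q)Q=Q$; but the hypothesis, applied to $X=VQ$, forces $\NN_{VQ}(Q)>Q$, a contradiction. Therefore $\OO_{\pi'}(G)\le Q$.

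With these reductions in hand I would rerun the Clifford-theoretic induction underlying Theorem~\ref{thm:A}, now tracking equalities rather than inequalities. Fix a minimal normal subgroup $L$ of $G$; by $\pi$-separability $L$ is either a $\pi$-group or a $\pi'$-group, and when $V>1$ I would take $L\le V\le Q$. Using the compatibility of vertices with the Clifford correspondence, one decomposes $\I(G|Q)$ along the $G$-orbits of the characters of $L$ lying below its members and reduces, orbit by orbit, to the stabilizers $T_\theta=G_\theta$. The hypothesis is inherited by each $T_\theta$: a $\pi'$-subgroup of $T_\theta$ properly containing $Q$ is such a subgroup of $G$, so $Q<\NN_X(Q)$ survives, while $\NN_{T_\theta}(Q)=N\cap T_\theta$; hence whenever $T_\theta<G$ the inductive hypothesis yields $|\I(T_\theta|Q)|=|\I(N\cap T_\theta|Q)|$, and a parallel decomposition of $\I(N|Q)$ — matched to the $G$-side through a Glauberman correspondence in the coprime case where $L$ is a $\pi$-group, and through an $\OO_{\pi'}$-reduction attached to $L$ in the $\pi'$-case — recombines these into the equality for $G$.

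The main obstacle is the fully ramified base case in which no proper $T_\theta$ arises, i.e.\ every character of $L$ relevant to $\I(G|Q)$ is $G$-invariant. Here the induction stalls and one must show directly that the injection of Theorem~\ref{thm:A} is already onto. This is exactly the configuration in which that injection can fail to be surjective in general, and I expect the condition $Q<\NN_X(Q)$ — applied to the $\pi'$-overgroups of $Q$ produced from $L$ — to be precisely what rules out a strict inequality, thereby promoting the bound of Theorem~\ref{thm:A} to the equality asserted in Theorem~\ref{thm:B}.
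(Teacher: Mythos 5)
Your preliminary reductions are sound and essentially match the paper's: the observation that $\OO_{\pi'}(\NN_G(Q))>Q$ forces $\dz(\NN_G(Q)/Q)=\emptyset$, hence $\I(\NN_G(Q)|Q)=\emptyset$, and the use of the hypothesis $Q<\NN_X(Q)$ applied to $X=\OO_{\pi'}(G)Q$ to force $\OO_{\pi'}(G)\le Q$, are exactly the content of Step~2 of the paper's argument (run in the contrapositive direction). Indeed, this is the \emph{only} place where the extra hypothesis on $Q$ is ever used.

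The rest of the proposal has a genuine gap, and you have located it yourself: the ``fully ramified'' case where every relevant $\theta\in\Irr(L)$ is $G$-invariant, so that $T_\theta=G$ and an induction on $|G|$ stalls. Your hope that the condition $Q<\NN_X(Q)$ rules out strict inequality there is misdirected --- as noted above, that condition is spent entirely on the $\OO_{\pi'}$-reduction, and the invariant case is handled in the paper \emph{unconditionally} by two tools absent from your sketch. First, the theorem is proved in a relative form (Theorem~3.1): one carries along a normal $\pi$-subgroup $Z$ with a $G$-invariant $\lambda\in\Irr(Z)$ and inducts on $|G:Z|$ rather than on $|G|$; after centralizing $Z$ by a character-triple isomorphism and killing $\OO_{\pi'}(G)$, the Hall--Higman lemma forces $Z<K=\OO_\pi(G)$, so replacing $(Z,\lambda)$ by $(K,\tau)$ strictly decreases the induction parameter even when $G_\tau=G$. (This relative formulation also repairs a second, quieter mismatch in your sketch: the Clifford decomposition produces the counts $|\I(T_\theta|Q,\theta)|$, which the bare inductive statement $|\I(T_\theta|Q)|=|\I(\NN_{T_\theta}(Q)|Q)|$ does not control.) Second, the ``horizontal'' matching between the $G$-side and the $\NN_G(Q)$-side for an invariant $\tau\in\Irr(K)$ is an exact equality $|\I(G|Q,\tau)|=|\I(\NN_G(Q)|Q,\tau^*)|$, obtained from the Dade--Puig--Lewis theorem (a character-triple isomorphism lying above the Glauberman--Isaacs correspondence) together with the degree criterion of Lemma~\ref{6.30}; a bare appeal to ``a Glauberman correspondence'' does not yield this. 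Without these two ingredients the case you call the main obstacle remains unproved, so the argument as written does not close.
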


The following is an immediate consequence of Theorem  \ref{thm:B}
in which $\XX_\pi(G)$ denotes the set of {\bf $\pi$-special} characters of $G$
(for the definition, see Section 2 below).

\begin{corA}\label{cor:C}
Let $G$ be a $\pi$-separable group and let $Q$ be a Hall $\pi$-complement of $G$. Then
\begin{enumerate}[label={\upshape(\alph*)}]
\item $|\I(G|Q)|=|\I(\NN_G(Q)|Q)|$, and
\item $|\XX_\pi(G)|=|\Irr(\NN_G(Q)/Q)|$.
\end{enumerate}
\end{corA}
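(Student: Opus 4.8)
The plan is to deduce both parts from Theorem \ref{thm:B} together with the special features of a Hall $\pi$-complement. First I observe that since $Q$ is a Hall $\pi$-complement, it is a maximal $\pi'$-subgroup of $G$: any $\pi'$-subgroup $X$ with $Q\le X\le G$ satisfies $|X|\le|Q|$, because $|Q|$ already equals the full $\pi'$-part of $|G|$, and hence $X=Q$. Thus there is no $\pi'$-subgroup of $G$ properly containing $Q$, so the hypothesis of Theorem \ref{thm:B}---that $Q<\NN_X(Q)$ for every such $X$---holds vacuously. Theorem \ref{thm:B} then yields part (a) at once, namely $|\I(G|Q)|=|\I(\NN_G(Q)|Q)|$.

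For part (b) I would connect each side of the equality in (a) to the two quantities in the statement. On the left, I would invoke the characterization of $\pi$-special characters through vertices recalled in Section 2: restriction to $\pi$-elements $\chi\mapsto\chi^0$ is a bijection from $\XX_\pi(G)$ onto the set of irreducible $\pi$-partial characters of $G$ whose vertex is a full Hall $\pi$-complement, that is, onto $\I(G|Q)$. This gives $|\XX_\pi(G)|=|\I(G|Q)|$. On the right, I would apply Lemma \ref{6.28}, according to which $|\I(\NN_G(Q)|Q)|$ equals the number of $\pi'$-weights of $G$ with first component $Q$, i.e.\ the number of $\tau\in\Irr(\NN_G(Q)/Q)$ having $p$-defect zero for every prime $p\in\pi'$.

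The one point genuinely needing verification is that this defect condition is automatic, and this is where being a Hall $\pi$-complement enters a second time. Since $|G:Q|$ is a $\pi$-number and $Q\le\NN_G(Q)\le G$, the quotient $\NN_G(Q)/Q$ is a $\pi$-group; consequently both $|\NN_G(Q):Q|$ and every degree $\tau(1)$ with $\tau\in\Irr(\NN_G(Q)/Q)$ are $\pi$-numbers, so their $p$-parts coincide (each equal to $1$) for every $p\in\pi'$. Hence every $\tau\in\Irr(\NN_G(Q)/Q)$ determines a $\pi'$-weight, giving $|\I(\NN_G(Q)|Q)|=|\Irr(\NN_G(Q)/Q)|$. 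Chaining these identities with part (a) produces
$$|\XX_\pi(G)|=|\I(G|Q)|=|\I(\NN_G(Q)|Q)|=|\Irr(\NN_G(Q)/Q)|,$$
which is part (b). I expect no serious obstacle in the deduction itself; the substantive input is the $\pi$-special/full-vertex correspondence imported from Section 2, while the two delicate-looking hypotheses---the containment condition of Theorem \ref{thm:B} and the $p$-defect-zero requirement on weights---both degenerate precisely because $Q$ is a full Hall $\pi$-complement.
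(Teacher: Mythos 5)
Your proof is correct. Part (a) is handled exactly as the paper does (the hypothesis of Theorem \ref{thm:B} is vacuous because a Hall $\pi$-complement is a maximal $\pi'$-subgroup), and the left-hand identification $|\XX_\pi(G)|=|\I(G|Q)|$ via Lemmas \ref{3.14} and \ref{5.18} is also the paper's step. Where you diverge is in converting $|\I(\NN_G(Q)|Q)|$ into $|\Irr(\NN_G(Q)/Q)|$: you invoke Lemma \ref{6.28}, which gives a bijection $\dz(\NN_G(Q)/Q)\to\I(\NN_G(Q)|Q)$, and then observe that the $\pi'$-defect-zero condition is automatic because $\NN_G(Q)/Q$ is a $\pi$-group, so $\dz(\NN_G(Q)/Q)=\Irr(\NN_G(Q)/Q)$. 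The paper instead applies Lemmas \ref{3.14} and \ref{5.18} a second time inside $\NN_G(Q)$ to get $|\XX_\pi(\NN_G(Q))|=|\I(\NN_G(Q)|Q)|$, and then identifies $\XX_\pi(\NN_G(Q))$ with $\XX_\pi(\NN_G(Q)/Q)=\Irr(\NN_G(Q)/Q)$ using the fact that normal $\pi'$-subgroups lie in the kernel of every $\pi$-special character. Both routes are short and rely only on material recalled in Section 2; yours has the small advantage of making the statement literally about weights (it says directly that every $\tau\in\Irr(\NN_G(Q)/Q)$ is a weight character for $Q$), while the paper's keeps everything inside the $\pi$-special framework and avoids mentioning defect zero. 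One cosmetic remark: when you say $\chi\mapsto\chi^0$ maps $\XX_\pi(G)$ onto $\I(G|Q)$, you are implicitly combining Lemma \ref{3.14} (image is the set of $\varphi$ of $\pi$-number degree) with Lemma \ref{5.18} (for a Hall $\pi'$-subgroup $Q$, that set is exactly $\I(G|Q)$); it is worth citing both explicitly, as the conjugacy of Hall $\pi'$-subgroups in a $\pi$-separable group is what lets you speak of a single $\I(G|Q)$ here.
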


We mention that part (b) in the above corollary is a deep result due to Wolf,
which appears in \cite{W1990} as Corollary 1.16.

All groups considered in this paper are finite, and the notation and terminology are mostly taken from Isaacs' books
\cite{I1976} and \cite{I2018}.

\section{Preliminaries}
In this section, we will review some necessary materials and most of them are taken from \cite{I2018}.

\subsection{$\pi$-Partial characters and vertices}
Let $G$ be a $\pi$-separable group for a set $\pi$ of primes, and let $G^0$ be the set of $\pi$-elements of $G$.
For any complex character $\chi$ of $G$, we say that the restriction $\chi^0$ of $\chi$ on $G^0$ is a {\bf $\pi$-partial character} of $G$. If a $\pi$-partial character $\varphi$ of $G$ cannot be written as a sum of two $\pi$-partial characters of $G$, then we say that $\varphi$ is an {\bf irreducible $\pi$-partial character} of $G$.
We use $\I(G)$ to denote the set of irreducible $\pi$-partial characters of $G$.
Note that if $\pi=p'$, the complement of a single prime $p$, then $\I(G)=\IBr_p(G)$ by the Fong-Swan theorem for $p$-solvable groups, and if $\pi$ is the set of all primes, then $\I(G)=\Irr(G)$.

Recall that an irreducible complex character $\chi\in\Irr(G)$ is \textbf{$\pi$-special} if $\chi(1)$ is a $\pi$-number and the determinantal order $o(\theta)$ is a $\pi$-number, where $\theta$ is any irreducible constituent of $\chi_S$ for every subnormal subgroup $S$ of $G$. Denote by $\XX_\pi(G)$ for the set of all $\pi$-special characters of $G$.

The following is essential in the study of $\pi$-partial characters.
\begin{lem}[{\cite[Theorem 3.14]{I2018}}]\label{3.14}
Let $G$ be $\pi$-separable. Then the map $\chi\mapsto\chi^0$
defines a bijection from $\XX_\pi(G)$ onto the set of those irreducible $\pi$-partial characters of $G$ whose degree is a $\pi$-number.
\end{lem}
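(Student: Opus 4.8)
The plan is to verify three things: that $\chi\mapsto\chi^0$ carries $\XX_\pi(G)$ into the set $\mathscr{S}$ of irreducible $\pi$-partial characters of $G$ of $\pi$-degree (well-definedness), that this map is injective on $\XX_\pi(G)$, and that it is surjective onto $\mathscr{S}$. The degree statement is immediate, since $\chi\in\XX_\pi(G)$ forces $\chi(1)$ to be a $\pi$-number by definition, and $\chi^0(1)=\chi(1)$. The whole difficulty is thus concentrated in three places: the irreducibility of $\chi^0$, the uniqueness of a $\pi$-special lift, and the construction of such a lift for an arbitrary member of $\mathscr{S}$.

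First I would prove that $\chi^0\in\I(G)$ for every $\chi\in\XX_\pi(G)$, by induction on $|G|$. Using $\pi$-separability, choose $N\NM G$ with $G/N$ a chief factor, so that $G/N$ is either a $\pi$-group or a $\pi'$-group, and let $\theta$ be an irreducible constituent of $\chi_N$. Since $N$ is subnormal and $\chi$ is $\pi$-special, $\theta$ is $\pi$-special in $N$, whence by induction $\theta^0\in\I(N)$ and $\theta(1)$ is a $\pi$-number. If $G/N$ is a $\pi'$-group, then $\chi(1)/\theta(1)$ is at once a $\pi$-number and a divisor of the $\pi'$-number $|G/N|$, forcing $\chi_N=\theta$; as every $\pi$-element of $G$ already lies in $N$ we have $G^0=N^0$, and the Clifford theory of $\pi$-partial characters then shows that $\chi^0$ inherits irreducibility from $\theta^0$. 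If instead $G/N$ is a $\pi$-group, one must show that both Clifford alternatives (a stable extension and an induced character) preserve irreducibility of the associated $\pi$-partial character. This is the step I expect to be the main obstacle, as it draws on the full Clifford machinery for $\I$ together with careful control of how the determinantal-order condition propagates through the $\pi$-section.

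For injectivity I would again induct on $|G|$. Given $\chi,\psi\in\XX_\pi(G)$ with $\chi^0=\psi^0$, restriction to $N^0$ yields $(\chi_N)^0=(\psi_N)^0$, and the linear independence of the irreducible $\pi$-partial characters of $N$ (a foundational fact of the theory) together with the inductive hypothesis forces the $\pi$-special constituents of $\chi_N$ and $\psi_N$ to agree up to $G$-conjugacy. I would then transport this equality back up to $G$ via the key uniqueness property of $\pi$-special characters: over a fixed $\pi$-special character of $N$ there is at most one $\pi$-special character of $G$ compatible with the prescribed restriction, because the requirement that the determinantal order be a $\pi$-number singles out a unique candidate. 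This pins down $\chi=\psi$.

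Finally, for surjectivity I would take $\varphi\in\mathscr{S}$ and build a $\pi$-special lift by induction on $|G|$: restricting $\varphi$ to $N^0$ and applying the inductive construction to its constituents produces $\pi$-special characters of $N$ lying underneath $\varphi$, which I would then extend to, or induce up to, $G$—selecting in each case the candidate of $\pi$-determinantal order—so as to obtain $\chi\in\XX_\pi(G)$ with $\chi^0=\varphi$. The delicate point here mirrors the obstacle above, namely verifying that the character so constructed is genuinely $\pi$-special and that its restriction to $G^0$ recovers $\varphi$ itself rather than a proper constituent.
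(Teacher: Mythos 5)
First, a point of comparison: the paper offers no proof of this lemma at all --- it is imported verbatim from \cite[Theorem 3.14]{I2018} --- so there is no in-paper argument to measure yours against, and your attempt must stand on its own. As it stands, it is an outline rather than a proof, and the outline stops exactly where the theorem becomes hard. For the irreducibility of $\chi^0$ you reduce to a chief factor $G/N$ and handle the $\pi'$-quotient case essentially correctly, but in the $\pi$-quotient case you only record that one must show ``both Clifford alternatives preserve irreducibility'' and label this the expected main obstacle; that step \emph{is} the theorem, and no mechanism for it is proposed. (Note also that for a chief factor there are not just two Clifford alternatives: the inertia group of $\theta$ can be any intermediate subgroup, so one must combine the Clifford correspondence for $\I$ with an extension argument at the inertia group.)

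Second, your injectivity argument rests on a claim that is false precisely in the case where it is needed. The assertion that over a fixed $\pi$-special character of $N$ there is at most one compatible $\pi$-special character of $G$, because the determinantal-order condition singles out a unique candidate, is correct only when $|G:N|$ is a $\pi'$-number (uniqueness of the canonical extension). When $G/N$ is a $\pi$-group it fails badly: take $N=1$ and $G$ a nontrivial $\pi$-group, where every irreducible character is $\pi$-special and lies over the trivial character of $N$. In that case the data distinguishing $\chi$ from $\psi$ lives on $G^0\setminus N$, which your argument never uses. The clean route to injectivity is Gajendragadkar's restriction theorem: for a Hall $\pi$-subgroup $H$ of $G$ the map $\chi\mapsto\chi_H$ is injective on $\XX_\pi(G)$, and since $H\subseteq G^0$, the equality $\chi^0=\psi^0$ forces $\chi_H=\psi_H$ and hence $\chi=\psi$. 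Surjectivity has the same status in your write-up: ``extend or induce, selecting the candidate of $\pi$ determinantal order'' is the right intuition for the $\pi'$-quotient step, but the verification that the resulting character is genuinely $\pi$-special and that its restriction to $G^0$ recovers $\varphi$ itself is exactly the content of the theorem and is not supplied. In short: correct skeleton, but the load-bearing steps are missing, and the one explicit mechanism you do give for injectivity is invalid where it matters.
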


Now let $Q$ be a $\pi'$-subgroup of $G$.
We say that $Q$ is a \textbf{vertex} for an irreducible $\pi$-partial character $\varphi\in\I(G)$ if there exists a subgroup $U$ of $G$ and $\theta\in\I(U)$, such that $\varphi=\theta^G$, $\theta(1)$ is a $\pi$-number and $Q$ is a Hall $\pi'$-subgroup of $U$.  As mentioned in the introduction, we write $\I(G|Q)$ for the set of irreducible $\pi$-partial characters of $G$ with vertex $Q$.

\begin{lem}[{\cite[Corollary 5.18]{I2018}}]\label{5.18}
Let $G$ be $\pi$-separable and let $\varphi\in\I(G)$.
If $\varphi\in\I(G|Q)$, then $\varphi(1)_{\pi'}|Q|=|G|_{\pi'}$.
In particular, if $Q$ is a Hall $\pi'$-subgroup of $G$, then
$\varphi\in\I(G|Q)$ if and only if $\varphi(1)$ is a $\pi$-number.
\end{lem}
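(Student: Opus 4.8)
The plan is to read off both assertions directly from the definition of a vertex together with the degree formula for induced $\pi$-partial characters, and to defer to the two structural facts recalled in the introduction (existence and conjugacy of vertices, and conjugacy of Hall subgroups). First I would invoke the hypothesis $\varphi\in\I(G|Q)$ to fix a subgroup $U\le G$ and some $\theta\in\I(U)$ with $\varphi=\theta^G$, where $\theta(1)$ is a $\pi$-number and $Q$ is a Hall $\pi'$-subgroup of $U$. Since induction of $\pi$-partial characters multiplies degrees by the index, we have $\varphi(1)=\theta^G(1)=|G:U|\,\theta(1)$.

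Next I would pass to $\pi'$-parts. Because $\theta(1)$ is a $\pi$-number we have $\theta(1)_{\pi'}=1$, whence $\varphi(1)_{\pi'}=|G:U|_{\pi'}=|G|_{\pi'}/|U|_{\pi'}$. As $Q$ is a Hall $\pi'$-subgroup of $U$, its order is $|U|_{\pi'}=|Q|$, so $\varphi(1)_{\pi'}=|G|_{\pi'}/|Q|$, and rearranging gives the claimed identity $\varphi(1)_{\pi'}|Q|=|G|_{\pi'}$. This settles the first assertion with no real difficulty.

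For the ``in particular'' clause, assume $Q$ is a Hall $\pi'$-subgroup of $G$, so that $|Q|=|G|_{\pi'}$. If $\varphi\in\I(G|Q)$, then the identity just proved yields $\varphi(1)_{\pi'}=|G|_{\pi'}/|Q|=1$, so $\varphi(1)$ is a $\pi$-number. Conversely, suppose $\varphi(1)$ is a $\pi$-number. Here I would use the fact that every $\varphi\in\I(G)$ admits a vertex and that its vertices form a single conjugacy class of $\pi'$-subgroups of $G$: picking any vertex $Q_0$ of $\varphi$, the already-established formula gives $|Q_0|=|G|_{\pi'}/\varphi(1)_{\pi'}=|G|_{\pi'}$, so $Q_0$ is itself a Hall $\pi'$-subgroup of $G$. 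Since Hall $\pi'$-subgroups of a $\pi$-separable group are all conjugate, $Q_0$ is conjugate to $Q$, and because the vertices of $\varphi$ constitute a full conjugacy class, $Q$ too is a vertex of $\varphi$, i.e.\ $\varphi\in\I(G|Q)$.

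The entire argument is essentially a degree count, so I do not expect a genuine obstacle; the only nonelementary inputs are the existence-and-conjugacy statement for vertices and the conjugacy of Hall $\pi'$-subgroups in the $\pi$-separable setting, both of which are available from the background cited above.
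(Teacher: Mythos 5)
Your argument is correct: the identity $\varphi(1)_{\pi'}|Q|=|G|_{\pi'}$ falls straight out of the definition of a vertex together with the degree formula $\theta^G(1)=|G:U|\,\theta(1)$, and the converse in the ``in particular'' clause is handled properly by invoking the existence of a vertex, the fact that the vertices form a single conjugacy class, and the conjugacy of Hall $\pi'$-subgroups in a $\pi$-separable group. The paper itself gives no proof of this statement --- it is quoted verbatim from Isaacs' book --- but your derivation is exactly the standard one behind the cited result, so there is nothing to flag.
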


Also, we say that $\chi\in\Irr(G)$ has {\bf $\pi'$-defect zero} if $\chi(1)_{\pi'}=|G|_{\pi'}$
or equivalently, if $\chi$ has $p$-defect zero for every $p\in\pi'$.
Write $\dz(G)$ for the set of irreducible complex characters of $G$ having $\pi'$-defect zero,
so a pair $(Q,\tau)$ is a $\pi'$-weight of $G$ if $Q$ is a $\pi'$-subgroup of $G$ and $\tau\in\dz(\NN_G(Q)/Q)$.

We need two basic facts on vertices and weights.

\begin{lem}[{\cite[Theorem 5.17]{I2018}}]
Let $G$ be a $\pi$-separable group and let $\varphi\in\I(G)$. Then all vertices for $\varphi$ is a single conjugacy class of $\pi'$-subgroups of $G$.
\end{lem}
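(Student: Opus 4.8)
The plan is to argue by induction on $|G|$, using Clifford theory to push the problem into a proper inertia subgroup. First observe that a vertex exists at all: either $\varphi$ already has $\pi$-degree, so that $(G,\varphi)$ is itself an inducing pair, or $\varphi$ is properly induced from a $\pi$-special character of a proper subgroup by the foundational structure theory of irreducible $\pi$-partial characters. Thus the real content is that any two inducing pairs $(U_1,\theta_1)$ and $(U_2,\theta_2)$ -- with $\theta_i\in\I(U_i)$ of $\pi$-degree and $\theta_i^{G}=\varphi$ -- have $G$-conjugate Hall $\pi'$-subgroups $Q_1,Q_2$.

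I would dispose of the base case first. If $\varphi(1)$ is a $\pi$-number, then for any inducing pair $(U,\theta)$ with vertex $Q\in\Hall_{\pi'}(U)$, Lemma \ref{5.18} gives $\varphi(1)_{\pi'}|Q|=|G|_{\pi'}$; since $\varphi(1)_{\pi'}=1$ this forces $|Q|=|G|_{\pi'}$, so every vertex is a full Hall $\pi'$-subgroup of $G$, and these are mutually conjugate because Hall $\pi'$-subgroups of a $\pi$-separable group are conjugate. Hence we may assume $\varphi(1)$ is not a $\pi$-number, and in particular every inducing subgroup $U$ is proper in $G$ (otherwise $\varphi=\theta$ would have $\pi$-degree).

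For the inductive step I would run Clifford theory. Choose a normal subgroup $N\trianglelefteq G$ minimal subject to $\varphi_N$ being non-homogeneous, let $\xi$ be an irreducible constituent of $\varphi_N$ in $\I(N)$, and set $T=G_\xi<G$; the Clifford correspondence for $\pi$-partial characters then yields a unique $\psi\in\I(T)$ lying over $\xi$ with $\psi^{G}=\varphi$. The easy half of the argument is that every vertex of $\psi$ is a vertex of $\varphi$: if $\psi=\theta^{T}$ for an inducing pair $(U,\theta)$ with $U\le T$, then $\varphi=\psi^{G}=\theta^{G}$, so $(U,\theta)$ induces $\varphi$ with the same Hall $\pi'$-subgroup. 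By the inductive hypothesis applied to $\psi$ in the smaller $\pi$-separable group $T$, all vertices of $\psi$ form a single $T$-conjugacy class, hence a single $G$-conjugacy class.

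The remaining, and hardest, step is the converse: that an arbitrary inducing pair $(U,\theta)$ for $\varphi$ can be replaced by a $G$-conjugate that factors through $T$, so that its Hall $\pi'$-subgroup is realized as a vertex of $\psi$. For this I would analyze $(\theta^{G})_N=\varphi_N$ by a Mackey/double-coset computation, forcing the $G$-conjugates of $\xi$ that occur to be accounted for by $U$ and thereby pushing a suitable $G$-conjugate of $U$ into the stabilizer $T$ while tracking its Hall $\pi'$-subgroup up to conjugacy. The main obstacle is the quasi-primitive case, where $\varphi_N$ is homogeneous for every normal $N$ and no proper inertia subgroup is available; here the Clifford reduction fails, and one must instead invoke the canonical nucleus $(W,\gamma)$ of $\varphi$, with $\gamma\in\XX_\pi(W)$ and $(\gamma^0)^{G}=\varphi$, whose $G$-conjugacy class is uniquely determined, and argue directly that the $\pi$-degree hypothesis on $\theta$ forces a Hall $\pi'$-subgroup of $U$ to be conjugate to one of $W$. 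Controlling the Hall $\pi'$-subgroups through both the double-coset bookkeeping and this primitive case is where essentially all the difficulty concentrates.
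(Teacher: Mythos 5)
First, a point of order: the paper offers no proof of this statement at all --- it is quoted verbatim from Isaacs' book as \cite[Theorem 5.17]{I2018} and used as a black box --- so there is no in-paper argument to compare yours against; your attempt has to stand or fall on its own.

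Judged on its own terms, what you have written is a plan rather than a proof, and the gap sits exactly where you yourself locate it. The base case and the ``easy half'' of the Clifford reduction are fine (although you should derive $\varphi(1)_{\pi'}\,|Q|=|G|_{\pi'}$ directly from $\varphi=\theta^G$ with $\theta(1)$ a $\pi$-number, rather than quote Lemma \ref{5.18}, which in Isaacs' development is a \emph{corollary} of the very theorem you are proving). But the two steps that carry all of the content are left as declared intentions. (i) You assert that an arbitrary inducing pair $(U,\theta)$ with $\theta^G=\varphi$ can be replaced by a $G$-conjugate factoring through $T=G_\xi$, with its Hall $\pi'$-subgroup tracked through the replacement; the ``Mackey/double-coset computation'' meant to establish this is never performed, and it is not even clear that the statement is true in the strong form you need (namely that $U$ itself is conjugate into $T$) --- what must be proved is only the conjugacy of the Hall $\pi'$-subgroups, and that weaker statement is the whole difficulty. (ii) In the quasi-primitive case you ``invoke the canonical nucleus $(W,\gamma)$,'' but the existence and conjugacy-uniqueness of the nucleus is itself a substantial piece of the theory, and even granting it, the claim that the $\pi$-degree hypothesis on $\theta$ forces a Hall $\pi'$-subgroup of $U$ to be conjugate to one of $W$ \emph{is} the theorem in that case and is nowhere argued. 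Until (i) and (ii) are actually carried out, this cannot be accepted as a proof; the honest course here is to do what the paper does and cite \cite[Theorem 5.17]{I2018}.
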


\begin{lem}[{\cite[Lemma 6.28]{I2018}}]\label{6.28}
Let $G$ be a $\pi$-separable group and let $Q$ be a $\pi'$-subgroup of $G$. Then the number of $\pi'$-weights of $G$ with first component $Q$ is equal to $|\I(\NN_G(Q)|Q)|$.
Actually, $\tau\mapsto\tau^0$ defines a canonical bijection
$\dz(\NN_G(Q)/Q)\to\I(\NN_G(Q)|Q)$.
\end{lem}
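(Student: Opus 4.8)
The plan is to set $N=\NN_G(Q)$ and first dispose of the counting statement: by the very definition of a $\pi'$-weight, a weight of $G$ with first component $Q$ is a pair $(Q,\tau)$ with $\tau\in\dz(N/Q)$, so the number of such weights is simply $|\dz(N/Q)|$. Thus the entire lemma reduces to proving that the indicated map $\tau\mapsto\tau^0$ is a bijection $\dz(N/Q)\to\I(N|Q)$, where $\tau$ is understood to be inflated to $N$ and then restricted to the $\pi$-elements. Since $Q$ is a \emph{normal} $\pi'$-subgroup of $N$, I would not attack this directly but instead factor the map through the quotient $\bar N:=N/Q$, writing it as a composite of two bijections: a ``defect-zero correspondence'' $\dz(\bar N)\to\I(\bar N\mid 1)$ onto the trivial-vertex characters of $\bar N$, followed by an ``inflation'' bijection $\I(\bar N\mid 1)\to\I(N\mid Q)$.

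For the first arrow I would use Lemma \ref{5.18} with trivial vertex: an element $\psi\in\I(\bar N)$ has trivial vertex precisely when $\psi(1)_{\pi'}=|\bar N|_{\pi'}$, i.e.\ when $\psi$ has full $\pi'$-defect. The claim to establish is that $\tau\mapsto\tau^0$ sends $\dz(\bar N)$ bijectively onto the set of full-$\pi'$-defect members of $\I(\bar N)$. This is the $\pi$-analogue of the classical fact that ordinary characters of $p$-defect zero restrict to distinct irreducible Brauer characters and exhaust the projective (vertex-trivial) simple modules; concretely, one must show that a $\pi'$-defect-zero character restricts irreducibly to the $\pi$-elements, that distinct such characters remain distinct, and that every full-$\pi'$-defect irreducible $\pi$-partial character arises in this way.

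For the second arrow I would invoke that inflation along the normal $\pi'$-subgroup $Q$ induces a bijection $\I(\bar N)\to\I(N)$ (a normal $\pi'$-subgroup contributes nothing on $\pi$-elements) that is compatible with induction of $\pi$-partial characters. It then remains to verify that this bijection raises the vertex from the trivial subgroup to $Q$. In one direction, if $\psi\in\I(\bar N\mid 1)$ is written as $\psi=\theta^{\bar N}$ with $\theta\in\I(\bar U)$ of $\pi$-degree and $\bar U$ a $\pi$-subgroup of $\bar N$, then taking $U\le N$ to be the full preimage of $\bar U$ makes $Q$ a normal Hall $\pi'$-subgroup of $U$, and inflating $\theta$ and transporting the induction to $N$ exhibits the inflated character in $\I(N\mid Q)$. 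The reverse inclusion is obtained symmetrically: using Lemma \ref{3.14} a $\pi$-degree member of $\I(U)$ is $\sigma^0$ for a $\pi$-special $\sigma$, and a short determinantal-order and degree argument forces $Q$ into $\Ker\sigma$, so that the datum descends to $\bar N$.

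I expect the genuinely substantive step to be the defect-zero correspondence $\dz(\bar N)\to\I(\bar N\mid 1)$. Unlike Lemma \ref{3.14}, which treats the \emph{opposite} extreme of $\pi$-degree characters (vertex a full Hall $\pi'$-subgroup), the excerpt provides no ready-made bijection for the trivial-vertex end, so its irreducibility and surjectivity have to be drawn from the $\pi$-theory of $\pi'$-defect-zero characters. By comparison, the vertex bookkeeping in the second arrow is routine once one knows that inflation and induction of $\pi$-partial characters along $Q$ are compatible.
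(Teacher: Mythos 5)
The paper does not actually prove this statement; it is quoted verbatim from \cite[Lemma~6.28]{I2018}, so there is no in-paper argument to compare against. Measured against the standard proof in that reference, your architecture is the right one: set $N=\NN_G(Q)$, observe that the weight count is $|\dz(N/Q)|$ by definition, and factor the map through $\bar N=N/Q$. Your second arrow, the inflation bijection $\I(\bar N\mid 1)\to\I(N\mid Q)$, is essentially already available in the paper as Lemma~\ref{3.10}: part (a) gives the bijection $\I(\bar N)\to\I(N)$ and part (b) gives the vertex correspondence in one direction, with the converse following from the conjugacy of vertices; your hand-made preimage argument (and the $\pi$-special kernel argument, which works since $Q\NM N$ forces $Q\NM U$) is correct but reinvents this.

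The genuine gap is exactly where you locate it, and it is not a minor one: the bijection $\dz(\bar N)\to\I(\bar N\mid 1)$, $\tau\mapsto\tau^0$, is the entire content of the lemma once the reduction modulo $Q$ is made, and you assert it by analogy with the $p$-modular defect-zero/projective correspondence without proving any of its three ingredients (irreducibility of $\tau^0$ for $\tau$ of $\pi'$-defect zero, injectivity, and surjectivity onto the full-defect members of $\I(\bar N)$, the last requiring that every such $\pi$-partial character has a lift in $\Irr(\bar N)$, necessarily of $\pi'$-defect zero). None of this follows from the lemmas quoted in the paper --- Lemma~\ref{3.14} handles the opposite extreme of $\pi$-degree characters, and Lemma~\ref{5.18} only converts ``trivial vertex'' into the degree condition $\varphi(1)_{\pi'}=|\bar N|_{\pi'}$, which you use correctly. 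In Isaacs' development this step is a theorem in its own right, proved with the Fong-character and vertex machinery of his Chapter~5, so as written your proposal reduces the lemma to an unproved claim of comparable depth rather than establishing it.
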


\subsection{The Clifford correspondence for $\pi$-partial characters}
Let $K\le G$ and $\theta\in\I(K)$. We say that $\varphi\in\I(G)$ \textbf{lies over} $\theta$
or equivalently, that $\theta$ \textbf{lies under} $\varphi$ if $\theta$ is an irreducible constituent of $\varphi_K$.
We use $\I(G|\theta)$ to denote the set of all irreducible $\pi$-partial characters of $G$ lying over $\theta$.
It is natural to define $\theta^G$ by the usual formula for induced characters but applied only to $\pi$-elements of $G$,
so that $\theta^G$ is also a $\pi$-partial character of $G$.

Furthermore, if $K\NM G$, then $\theta^g\in\I(K)$ for all $g\in G$,
where $\theta^g$ is defined by $\theta^g(x^g)=\theta(x)$ for $x\in K^0$.
Then $G$ acts on the set $\I(K)$ via conjugation, and we write $G_\theta$ for the stabilizer of $\theta$ in $G$.

Now we can state the Clifford correspondence for $\pi$-partial characters.

\begin{lem}[{\cite[Theorem 5.11]{I2018}}]\label{5.11}
Suppose that $\theta\in \I(N)$, where $N\NM G$ and $G$ is $\pi$-separable.
Then induction $\alpha\mapsto\alpha^G$ defines a bijection $\I(G_\theta|\theta)\to\I(G|\theta)$.
Also, if $\varphi=\alpha^G$, then $\alpha$ is the unique irreducible $\pi$-partial character of $G_\theta$
that lies under $\varphi$ and over $\theta$, and in this case, we often write $\alpha=\varphi_\theta$
and call it the Clifford correspondent of $\varphi$ over $\theta$.
\end{lem}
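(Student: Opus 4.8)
The plan is to transpose the classical proof of the Clifford correspondence for ordinary characters, the one essential change being that no Hermitian inner product is available here; its role is taken by the \emph{multiplicity} $[\psi,\mu]$ of an irreducible $\pi$-partial character $\mu$ in a $\pi$-partial character $\psi$, which is well defined because $\I(H)$ is linearly independent as a set of functions on the $\pi$-elements $H^0$. Writing $T=G_\theta$ for the inertia group, I must establish three things: that $\alpha^G\in\I(G|\theta)$ for every $\alpha\in\I(T|\theta)$; that $\alpha\mapsto\alpha^G$ is injective; and that it is surjective, together with the uniqueness assertion describing $\varphi_\theta$.

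First I would assemble the formal machinery for $\pi$-partial characters evaluated on $\pi$-elements: transitivity of induction and the Mackey decomposition $(\alpha^G)_T=\sum_{TgT}((\alpha^g)_{T\cap T^g})^T$, both pointwise identities that restrict directly from the ordinary setting, together with Frobenius reciprocity in the multiplicity form $[\alpha^G,\varphi]=[\varphi_T,\alpha]$ for $\alpha\in\I(T)$ and $\varphi\in\I(G)$, which is the substantive preliminary that the $\pi$-theory must supply since no inner product is at hand. I would also record Clifford's theorem for $\pi$-partial characters: if $\varphi\in\I(G)$ lies over $\theta\in\I(N)$, then $\varphi_N=e\sum_{i=1}^t\theta^{g_i}$, where the $\theta^{g_i}$ run through the full $G$-orbit of $\theta$ and $e$ is a common multiplicity. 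Because $\theta$ is a class function on $N^0$, all of $N$ fixes $\theta$, so $N\le T$ and likewise $N\le T^g$; hence Clifford's theorem inside $T$ gives $\alpha_N=e\theta$ for $\alpha\in\I(T|\theta)$, and it forces \emph{every} irreducible constituent of $\alpha^G$ to lie over $\theta$.

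The core step is the irreducibility of $\alpha^G$. Expanding by Mackey, I would write $(\alpha^G)_T=\alpha+\Delta$, where the identity double coset contributes exactly $\alpha$ and each remaining term $((\alpha^g)_{T\cap T^g})^T$ with $g\notin T$ has all its constituents lying over the $T$-orbit of $\theta^g$; since $g\notin T=G_\theta$ this $T$-orbit omits $\theta$, so no constituent of $\Delta$ lies over $\theta$, giving $[(\alpha^G)_T,\alpha]=1$. Writing $\alpha^G=\sum_j m_j\varphi_j$ with distinct irreducible $\varphi_j$ and restricting yields $\sum_j m_j[(\varphi_j)_T,\alpha]=1$, so exactly one index survives, say $\varphi=\varphi_{j_0}$ with $m_{j_0}=1$, while $[(\varphi_j)_T,\alpha]=0$ for $j\ne j_0$. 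Reciprocity then returns $m_j=[\alpha^G,\varphi_j]=[(\varphi_j)_T,\alpha]=0$ for those $j$, contradicting that they are constituents; thus $\alpha^G=\varphi$ is irreducible and lies over $\theta$, i.e.\ $\alpha^G\in\I(G|\theta)$.

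The very same expansion $\varphi_T=\alpha+\Delta$ shows that $\alpha$ is the \emph{unique} constituent of $\varphi_T$ lying over $\theta$, and that it occurs with multiplicity one; this delivers both injectivity and the uniqueness of $\varphi_\theta$, since any two correspondents of a given $\varphi$ must equal this unique constituent. For surjectivity, given $\varphi\in\I(G|\theta)$ I would choose any constituent $\alpha$ of $\varphi_T$ lying over $\theta$, observe $\alpha\in\I(T|\theta)$, and combine $[\alpha^G,\varphi]=[\varphi_T,\alpha]\ge1$ with the irreducibility already proved to conclude $\alpha^G=\varphi$. The step I expect to be the main obstacle is precisely the irreducibility of $\alpha^G$: lacking orthogonality I cannot evaluate any $\langle\alpha^G,\alpha^G\rangle$, so the whole argument must be routed through the multiplicity-form reciprocity and the Mackey ``orbit separation'' of constituents over non-conjugate $\theta^g$, and it is the validity of that reciprocity for genuine $\pi$-partial characters, rather than for ordinary or Brauer characters, that carries the real weight.
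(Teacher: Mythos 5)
The paper gives no proof of this statement at all: it is quoted directly from Isaacs' book as Theorem 5.11 of \cite{I2018}, so your argument has to stand on its own — and it has a genuine gap exactly where you predicted the ``real weight'' would lie. Frobenius reciprocity in the multiplicity form $[\alpha^G,\varphi]=[\varphi_T,\alpha]$ is \emph{false} for $\pi$-partial characters. For $\pi=p'$ the irreducible $\pi$-partial characters of a $p$-solvable group are its irreducible Brauer characters, and already $G=C_p$, $H=1$ kills it: $(1_H)^G$ restricted to the $p$-regular elements of $G$ (namely $\{1\}$) takes the value $p$, so it equals $p$ times the unique irreducible Brauer character $\varphi$ of $G$, giving $[(1_H)^G,\varphi]=p$ while $[\varphi_H,1_H]=1$. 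The set $\I(G)$ is linearly independent but it is not self-dual under any pairing adjoint to induction and restriction; the correct reciprocity pairs $\I(G)$ with projective (Fong) characters, not with itself. The particular instance you need — induction from $T=G_\theta$ applied to characters lying over $\theta$ — is true only \emph{as a consequence of} the Clifford correspondence, so assuming it as a preliminary is circular.

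The damage is localized but real. Your Mackey computation $(\alpha^G)_T=\alpha+\Delta$ and the orbit-separation of constituents over non-$T$-conjugate characters are fine, and irreducibility of $\alpha^G$ (hence injectivity and the uniqueness of $\varphi_\theta$) can be rescued without any reciprocity: every irreducible constituent $\varphi_j$ of $\alpha^G=\sum_j m_j\varphi_j$ lies over $\theta$, so each $(\varphi_j)_T$ contains at least one constituent lying over $\theta$; comparing the total multiplicity of such constituents on the two sides of $\sum_j m_j(\varphi_j)_T=\alpha+\Delta$ forces $\sum_j m_j\le 1$, and irreducibility follows. But your surjectivity step is a bare application of the false identity ($[\alpha^G,\varphi]=[\varphi_T,\alpha]\ge 1$), and it admits no analogous patch: for $\pi$-partial characters, knowing that $\alpha$ is a constituent of $\varphi_T$ does not imply that $\varphi$ is a constituent of $\alpha^G$. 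Surjectivity is precisely the point where the published proofs must route through lifts to ordinary characters (or, for $\pi=p'$, through module-theoretic Clifford theory) rather than through a self-dual reciprocity, and as written your argument does not establish it.
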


Following Navarro-Sambale \cite{NS2021}, we write $\I(G|Q,\tau)=\I(G|Q)\cap \I(G|\tau)$,
where $Q$ is a $\pi'$-subgroup of a $\pi$-separable group $G$ and $\tau\in\I(N)$ for some $N\NM G$.

\begin{lem}[{\cite[Lemma 6.33]{I2018}}]\label{6.33}
Let $G$ be a $\pi$-separable group and let $\varphi\in\I(G|Q)$, where $Q$ is a $\pi'$-subgroup of $G$,
and let $K\NM G$.
\begin{enumerate}[label={\upshape(\alph*)}]
\item There exists a unique $\mathbf{N}_G(Q)$-orbit of some $\theta\in\I(K)$ such that $\varphi_\theta\in\I(G_\theta|Q)$.
\item If $\tau\in\I(K)$ is $Q$-invariant and $G=G_\tau\NN_G(Q)$,
then induction of $\pi$-partial characters defines a bijection
$\I(G_\tau|Q,\tau)\to \I(G|Q,\tau)$.
\end{enumerate}
\end{lem}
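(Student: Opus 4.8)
The plan is to reduce both statements to a single compatibility fact between vertices and the Clifford correspondence of Lemma~\ref{5.11}, which I will use throughout: \emph{if $\theta\in\I(K)$ lies under $\varphi$ and $R$ is a vertex of the Clifford correspondent $\varphi_\theta\in\I(G_\theta|\theta)$, then $R$ is also a vertex of $\varphi$.} Indeed, $R$ being a vertex of $\varphi_\theta$ means there are $U\le G_\theta$ and $\xi\in\I(U)$ with $\xi(1)$ a $\pi$-number, $R$ a Hall $\pi'$-subgroup of $U$, and $\xi^{G_\theta}=\varphi_\theta$; by transitivity of induction of $\pi$-partial characters together with $(\varphi_\theta)^G=\varphi$ from Lemma~\ref{5.11}, we get $\xi^G=\varphi$, so the same pair $(U,\xi)$ now witnesses $R$ as a vertex of $\varphi$. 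I will also use the naturality of the Clifford correspondence under conjugation: since $\varphi$ is $G$-invariant, $\varphi_{\theta^g}=(\varphi_\theta)^g$ for $g\in G$, and conjugation by $g$ carries vertices of $\varphi_\theta$ to vertices of $(\varphi_\theta)^g$. Finally I will invoke that the vertices of any irreducible $\pi$-partial character form a single conjugacy class (\cite[Theorem~5.17]{I2018}).

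For part (a), Clifford theory for $\pi$-partial characters shows that the $\theta\in\I(K)$ lying under $\varphi$ form a single $G$-orbit. For existence, pick any such $\theta_0$; its correspondent $\varphi_{\theta_0}$ has a vertex $Q_0$, which by the compatibility fact is a vertex of $\varphi$ and hence $G$-conjugate to $Q$, say $Q=Q_0^{x}$. Setting $\theta:=\theta_0^{x}$ and using naturality, $\varphi_\theta=(\varphi_{\theta_0})^{x}$ has vertex $Q_0^{x}=Q$, so $\varphi_\theta\in\I(G_\theta|Q)$. For uniqueness of the $\NN_G(Q)$-orbit, let $\theta,\theta'$ both lie under $\varphi$ with $\varphi_\theta\in\I(G_\theta|Q)$ and $\varphi_{\theta'}\in\I(G_{\theta'}|Q)$, and write $\theta'=\theta^g$. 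Then $\varphi_{\theta'}=(\varphi_\theta)^g$ has vertex $Q^g$, while it also has vertex $Q$; since the vertices of $\varphi_{\theta'}$ form a single $G_{\theta'}$-class, $Q^g=Q^h$ for some $h\in G_{\theta'}$, whence $gh^{-1}\in\NN_G(Q)$ and $\theta^{gh^{-1}}=\theta'$. Thus $\theta$ and $\theta'$ share an $\NN_G(Q)$-orbit, giving uniqueness.

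For part (b), the hypothesis that $\tau$ is $Q$-invariant gives $Q\le G_\tau$, so $\I(G_\tau|Q,\tau)$ makes sense, and Lemma~\ref{5.11} already yields a bijection $\I(G_\tau|\tau)\to\I(G|\tau)$ via $\alpha\mapsto\alpha^G$. It then suffices to show, for $\alpha\in\I(G_\tau|\tau)$ with $\varphi=\alpha^G$, that $\alpha\in\I(G_\tau|Q)$ if and only if $\varphi\in\I(G|Q)$; the asserted bijection is then the restriction of the one from Lemma~\ref{5.11}. The forward direction is immediate from the compatibility fact (with $\theta=\tau$): the vertex $Q$ of $\alpha=\varphi_\tau$ is a vertex of $\varphi$. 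For the converse, let $R$ be a vertex of $\alpha$; by the compatibility fact $R$ is a vertex of $\varphi$, so $R=Q^g$ for some $g\in G$. Rewriting the hypothesis $G=G_\tau\NN_G(Q)$ as $G=\NN_G(Q)G_\tau$ and factoring $g=nt$ with $n\in\NN_G(Q)$ and $t\in G_\tau$, we get $R=Q^{nt}=(Q^n)^t=Q^t$; hence $Q=R^{t^{-1}}$ is $G_\tau$-conjugate to the vertex $R$ of $\alpha$, so $Q$ is itself a vertex of $\alpha$, i.e.\ $\alpha\in\I(G_\tau|Q)$. I expect this last, Frattini-type step to be the main obstacle: the whole point of the factorization $G=\NN_G(Q)G_\tau$ is to upgrade the a priori merely $G$-conjugacy between $R$ and $Q$ to a $G_\tau$-conjugacy, which is exactly what is required to place the vertex $Q$ inside $G_\tau$ and conclude.
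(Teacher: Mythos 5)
Your proof is correct. Note that the paper itself gives no proof of this statement: it is quoted verbatim from Isaacs' book (Lemma 6.33 of \cite{I2018}), so there is no in-paper argument to compare against; your argument is essentially the standard one underlying that reference. The three ingredients you isolate are exactly the right ones and are each sound: transitivity of induction of $\pi$-partial characters makes any vertex of a Clifford correspondent a vertex of the induced character (the witnessing pair $(U,\xi)$ is simply reused), conjugacy-naturality $\varphi_{\theta^g}=(\varphi_\theta)^g$ follows from the uniqueness clause of Lemma \ref{5.11}, and the single-conjugacy-class property of vertices (Theorem 5.17 of \cite{I2018}) turns these into both the existence and the $\NN_G(Q)$-orbit uniqueness in (a). Your treatment of (b) correctly locates the only place the hypothesis $G=G_\tau\NN_G(Q)$ enters, namely the Frattini-type factorization $g=nt$ upgrading $G$-conjugacy of $R$ and $Q$ to $G_\tau$-conjugacy, and the remaining small steps you rely on (that $Q$-invariance of $\tau$ gives $Q\le G_\tau$, that $\alpha^{t^{-1}}=\alpha$ for $t\in G_\tau$ since inner conjugation fixes class functions on $G_\tau^0$, and that the constituents of $\varphi_K$ form a single $G$-orbit by Clifford theory for $\pi$-partial characters) are all legitimate.
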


The following corollary tells us how to count the quantity $|\I(G|Q)|$  ``over a normal subgroup".
\begin{cor}\label{cor}
Let $G$ be a $\pi$-separable group with $K\NM G$, and suppose that $Q$ is a $\pi'$-subgroup of $G$.
Let $\mathscr{A}$ be a set of representatives for the $\NN_G(Q)$-orbits
of $Q$-invariant members of $\I(K)$. Then
$$|\I(G|Q)|=\sum_{\tau\in\mathscr A}|\I(G_\tau|Q,\tau)|.$$
\end{cor}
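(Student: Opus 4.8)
The plan is to build an explicit bijection
$$\Psi\colon\bigsqcup_{\tau\in\mathscr A}\I(G_\tau|Q,\tau)\;\longrightarrow\;\I(G|Q),\qquad \psi\mapsto\psi^G,$$
and then read off the displayed identity by comparing cardinalities of the two sides. The construction rests on two facts. First, induction preserves the vertex: if $\tau\in\mathscr A$ and $\psi\in\I(G_\tau|Q,\tau)$, then $\tau$ is $Q$-invariant, so $Q\le G_\tau$ and $K\NM G_\tau$, and by Lemma \ref{5.11} the induced $\psi^G$ is irreducible, lies over $\tau$, and satisfies $(\psi^G)_\tau=\psi$. Writing $\psi=\eta^{G_\tau}$ with $\eta\in\I(U)$, $\eta(1)$ a $\pi$-number and $Q$ a Hall $\pi'$-subgroup of $U$ (such data exist because $Q$ is a vertex of $\psi$), transitivity of induction gives $\psi^G=\eta^G$, so the \emph{same} pair $(U,\eta)$ witnesses that $Q$ is a vertex of $\psi^G$. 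Hence $\Psi$ is well defined with image inside $\I(G|Q)$.

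First I would prove injectivity. Suppose $\psi_i\in\I(G_{\tau_i}|Q,\tau_i)$ for $\tau_1,\tau_2\in\mathscr A$ satisfy $\psi_1^G=\psi_2^G=:\varphi$. Then both $\tau_1$ and $\tau_2$ lie under $\varphi$, and by the previous paragraph $\varphi_{\tau_i}=\psi_i\in\I(G_{\tau_i}|Q)$. Thus each $\tau_i$ generates an $\NN_G(Q)$-orbit of exactly the kind described in Lemma \ref{6.33}(a), and the \emph{uniqueness} clause of that lemma forces $\tau_1$ and $\tau_2$ to be $\NN_G(Q)$-conjugate; as both are representatives in $\mathscr A$, we get $\tau_1=\tau_2$, whereupon injectivity of the Clifford correspondence (Lemma \ref{5.11}) yields $\psi_1=\psi_2$.

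Next I would prove surjectivity. Given $\varphi\in\I(G|Q)$, Lemma \ref{6.33}(a) produces a unique $\NN_G(Q)$-orbit of characters $\theta\in\I(K)$ with $\varphi_\theta\in\I(G_\theta|Q)$. Since $\varphi_\theta\in\I(G_\theta|Q)$ forces $Q\le G_\theta$, every such $\theta$ is $Q$-invariant, so this orbit meets $\mathscr A$ in a single $\tau$. Choosing $n\in\NN_G(Q)$ with $\theta=\tau^n$ and using that the Clifford correspondent and the vertex are both compatible with conjugation, from $\varphi_\theta=(\varphi_\tau)^n\in\I(G_\theta|Q)$ together with $Q^n=Q$ I would deduce $\varphi_\tau\in\I(G_\tau|Q)$. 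As $\varphi_\tau$ also lies over $\tau$, this gives $\varphi_\tau\in\I(G_\tau|Q,\tau)$ and $\Psi(\varphi_\tau)=(\varphi_\tau)^G=\varphi$, so $\Psi$ is onto.

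The hard part will be the bookkeeping that keeps the vertex equal to $Q$ throughout. Disjointness of the fibres is precisely where the uniqueness in Lemma \ref{6.33}(a) is indispensable: without it a single $\varphi$ could be induced from several $\NN_G(Q)$-orbits of $Q$-invariant constituents and be overcounted. The stability of the vertex under induction is what places $\Psi$ inside $\I(G|Q)$, and its stability under $\NN_G(Q)$-conjugation is what transports the condition $\varphi_\theta\in\I(G_\theta|Q)$ back to $\tau$ in the surjectivity step. I note that Lemma \ref{6.33}(b) suggests an alternative route, identifying each fibre with $\I(G|Q,\tau)$ whenever $G=G_\tau\NN_G(Q)$; but since that Frattini condition need not hold for every $\tau\in\mathscr A$, I prefer to argue directly through the Clifford correspondence of Lemma \ref{5.11}.
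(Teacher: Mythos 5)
Your proof is correct and is essentially the argument the paper intends (the corollary is stated there without proof, as a direct consequence of Lemma \ref{6.33}): one partitions $\I(G|Q)$ according to the unique $\NN_G(Q)$-orbit supplied by Lemma \ref{6.33}(a) and identifies each fibre with $\I(G_\tau|Q,\tau)$ via the Clifford correspondence of Lemma \ref{5.11}, exactly as you do. Your remark that Lemma \ref{6.33}(b) is not the right tool here, since the Frattini condition $G=G_\tau\NN_G(Q)$ need not hold for every $\tau\in\mathscr{A}$, is also well taken.
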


Sometimes we need to work in quotient groups modulo normal $\pi'$-subgroups when studying $\pi$-partial characters and vertices.

\begin{lem}[{\cite[Theorem 3.10 and Lemma 5.31]{I2018}}]\label{3.10}
Let $G$ be a $\pi$-separable group and let $N \NM G$ be a $\pi'$-subgroup.
Write $\bar G=G/N$. Then
\begin{enumerate}[label={\upshape(\alph*)}]
\item There is a bijection $\varphi \mapsto \bar{\varphi}$ from $\I(G)$ onto $\I(\bar G)$, where $\bar{\varphi}(\bar x) = \varphi(x)$ for all $x \in G^0$.
\item If $\varphi \in \I(G|Q)$, where $Q$ is a $\pi'$-subgroup of $G$, then $N \le Q$ and $\bar\varphi\in\I(\bar G|\bar Q)$.
In particular, we have ${\bf O}_{\pi'}(G)\le Q$.
\end{enumerate}
\end{lem}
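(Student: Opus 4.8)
The plan is to prove part~(a) in full and then deduce part~(b) from it. For part~(a), I would first record the decisive elementary fact that the projection $G\to\bar G$ restricts to a conjugacy-preserving bijection $G^0\to\bar G^0$. Since $N$ is a $\pi'$-group, inside the subgroup $\langle x\rangle N$ (for $x\in G^0$) the cyclic $\pi$-group $\langle x\rangle$ is a Hall $\pi$-subgroup with normal $\pi'$-complement $N$, so $x$ is the unique $\pi$-element of the coset $xN$; conversely the $\pi$-part of any preimage of a $\pi$-element of $\bar G$ lifts it uniquely. Hence the rule $\bar\varphi(\bar x):=\varphi(x)$ is well defined on $\bar G^0$ for every class function $\varphi$ on $G^0$, and $\varphi\mapsto\bar\varphi$ is a linear isomorphism from the class functions on $G^0$ onto those on $\bar G^0$, with inverse $T$ given by $(Tf)(x)=f(\bar x)$; in particular $|\I(G)|=|\I(\bar G)|$, as both count classes of $\pi$-elements.

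Writing $C(G)=\{\chi^0:\chi\in\Char(G)\}$ for the monoid of $\pi$-partial characters of $G$ (the $\mathbb{Z}_{\ge 0}$-span of $\I(G)$), the goal becomes to show that $T$ restricts to a monoid isomorphism $C(\bar G)\to C(G)$. Since the irreducible members are exactly the indecomposable generators of each monoid, this forces $T(\I(\bar G))=\I(G)$ and identifies $\varphi\mapsto\bar\varphi=T^{-1}\varphi$ with the asserted bijection. One inclusion is immediate: for $\psi\in\Char(\bar G)$ with inflation $\hat\psi$ one checks $T(\psi^0)=\hat\psi^0\in C(G)$, so $T(C(\bar G))\subseteq C(G)$. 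The crux is the reverse inclusion $T^{-1}(C(G))\subseteq C(\bar G)$, i.e.\ that $\bar\varphi$ is genuinely a $\pi$-partial character of $\bar G$; by additivity this reduces to showing $\overline{\chi^0}\in C(\bar G)$ for each $\chi\in\Irr(G)$.

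I would establish this by induction on $|G|$, peeling off $N$ via ordinary Clifford theory. Let $\theta\in\Irr(N)$ lie under $\chi$ and set $U=G_\theta$. If $U<G$, then $\chi=\eta^G$ for some $\eta\in\Irr(U|\theta)$, so $\chi^0=(\eta^0)^G$; since $N\le U$, $\pi$-partial induction commutes with the bar map, giving $\overline{\chi^0}=(\overline{\eta^0})^{\bar G}$, and the inductive hypothesis for $(U,N)$ places $\overline{\eta^0}$ in $C(\bar U)$, whence $\overline{\chi^0}\in C(\bar G)$. This reduces us to $G$-invariant $\theta$, where $\chi_N=e\theta$ with $\theta$ an irreducible character of the $\pi'$-group $N$. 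Here I would use the factorization of characters into $\pi$- and $\pi'$-special parts (the circle of ideas surrounding Lemma~\ref{3.14}): a $\pi'$-special character lying over $\theta$ carries all the $\pi'$-data contributed by $N$, and stripping it off should exhibit the values of $\chi$ on $\pi$-elements as coming from an honest character of $\bar G$. I expect this invariant case to be the main obstacle, since it is exactly where the $\pi'$-nature of $N$ must be converted into character-theoretic data on $\bar G$ compatibly with the monoid $C(\bar G)$.

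Part~(b) then follows from part~(a). Given $\varphi\in\I(G|Q)$, choose a vertex realization of $\bar\varphi\in\I(\bar G)$, say $\bar\varphi=\delta^{\bar G}$ with $\delta\in\I(V)$, $\delta(1)$ a $\pi$-number, and vertex a Hall $\pi'$-subgroup $R$ of $V$. Let $U\le G$ be the preimage of $V$, so $N\le U$ and $\bar U=V$; applying part~(a) to $U$ yields $\theta\in\I(U)$ with $\bar\theta=\delta$ and $\theta(1)=\delta(1)$ a $\pi$-number, and since $\pi$-partial induction commutes with the bar map, $\overline{\theta^G}=\delta^{\bar G}=\bar\varphi$, so $\theta^G=\varphi$ by injectivity of the bar map. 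The preimage $Q_0\le U$ of $R$ satisfies $N\le Q_0$ and $|Q_0|=|N|\,|R|=|U|_{\pi'}$, so $Q_0$ is a Hall $\pi'$-subgroup of $U$; thus $(U,\theta,Q_0)$ exhibits $Q_0$ as a vertex of $\varphi$ with $N\le Q_0$ and $\bar Q_0=R$, i.e.\ $\bar\varphi\in\I(\bar G|\bar Q_0)$. As the vertices of $\varphi$ form a single $G$-conjugacy class and $N\NM G$, every vertex contains $N$; hence $N\le Q$ and $\bar\varphi\in\I(\bar G|\bar Q)$. Applying this with $N=\OO_{\pi'}(G)$ gives the final assertion $\OO_{\pi'}(G)\le Q$.
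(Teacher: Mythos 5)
A preliminary remark on the comparison you were asked for: the paper does not prove this lemma at all --- it is quoted directly from Isaacs' book (Theorem 3.10 and Lemma 5.31 of \cite{I2018}) --- so your argument has to stand entirely on its own, and at present it does not. The first issue in part (a) is a fixable error: your ``decisive elementary fact'' is false. The projection $G\to\bar G$ is \emph{not} injective on $G^0$, and a coset $xN$ with $x\in G^0$ may contain many $\pi$-elements: take $G=S_3$, $N=A_3$, $\pi=\{2\}$; the nontrivial coset of $N$ contains three involutions, all mapping to the single involution of $\bar G\cong C_2$. Your justification fails because the Hall $\pi$-subgroups of $\langle x\rangle N$ complementing $N$ are unique only up to conjugacy, not unique. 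What is true --- and is what well-definedness of $\bar\varphi$ on class functions actually needs --- is that any two $\pi$-elements of the same coset of $N$ are $N$-conjugate (apply the conjugacy part of Schur--Zassenhaus inside $\langle x\rangle N$, whose quotient by $N$ is cyclic, hence solvable), so the projection induces a bijection on $\pi$-\emph{classes} rather than on $\pi$-elements.

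The second issue is a genuine gap, and it sits exactly where you yourself ``expect the main obstacle'': the case of $G$-invariant $\theta\in\Irr(N)$, i.e.\ showing that $\overline{\chi^0}$ is a $\pi$-partial character of $\bar G$ there, is not proved but only hoped for (``stripping it off \emph{should} exhibit\ldots''). Moreover, the tool you gesture at --- factorization into $\pi$-special and $\pi'$-special parts --- is not available for an arbitrary $\chi\in\Irr(G)$: in a $\pi$-separable group only the $\pi$-factored irreducible characters admit such a factorization, and lying over an invariant $\theta\in\Irr(N)$ with $N$ a $\pi'$-group does not make $\chi$ factored, so the sketch as stated would not go through even in outline. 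Since this invariant case is precisely the content of the lemma (it is the $\pi$-analogue of the fact that $\OO_p(G)$ acts trivially on simple modules in characteristic $p$), your monoid argument --- and with it part (b), whose deduction from (a) is otherwise sound (the pullback of a vertex realization along the bar map, the compatibility of partial induction with the bar map for $N\le U$, and conjugacy of vertices all check out) --- remains unsupported. A smaller caution: the aside that $|\I(G)|=|\I(\bar G)|$ ``as both count classes of $\pi$-elements'' quietly imports the nontrivial theorem that $|\I(G)|$ equals the number of $\pi$-classes, which in Isaacs' development is itself proved by arguments of this kind; your proof does not actually need it, so it is safer omitted.
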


\begin{lem}[{\cite[Theorem 6.31]{I2018}}]\label{6.31}
Let $G$ be a $\pi$-separable group and let $K \NM G$ be a $\pi$-subgroup.
Also, let $Q\le G$ be a $\pi'$-subgroup, and let $L\NM G$, where $L\le Q$.
Let $\theta\in\Irr(K)$ be $G$-invariant, and write $\bar G=G/L$.
Then $|\I(G|Q,\theta)|=|\I(\bar G|\bar Q,\bar\theta)|$,
where $\bar\theta\in\Irr(\bar K)$ corresponds to $\theta$ via the natural isomorphism $K\to KL/L$.
\end{lem}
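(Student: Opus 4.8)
The plan is to realize the claimed equality through the canonical bijection $\varphi\mapsto\bar\varphi$ of Lemma~\ref{3.10}, and to check that it restricts to a bijection between $\I(G|Q,\theta)$ and $\I(\bar G|\bar Q,\bar\theta)$. Since $L\le Q$ and $Q$ is a $\pi'$-group, $L$ is a normal $\pi'$-subgroup of $G$, so Lemma~\ref{3.10} applies with $N=L$ and supplies a bijection $\I(G)\to\I(\bar G)$, $\varphi\mapsto\bar\varphi$, determined by $\bar\varphi(\bar x)=\varphi(x)$ for $x\in G^0$. It then suffices to verify that this bijection preserves, in both directions, the two conditions defining membership in $\I(G|Q,\theta)$: lying over $\theta$, and having vertex $Q$.

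First I would handle the condition of lying over $\theta$. Because $K$ is a $\pi$-subgroup and $L$ is a $\pi'$-group, we have $K\cap L=1$, so the natural map $K\to\bar K=KL/L$ is an isomorphism carrying $\theta$ to $\bar\theta$. For $x\in K$ (every element of $K$ is a $\pi$-element) the defining formula gives $\bar\varphi(\bar x)=\varphi(x)$, so under this isomorphism the restriction $\varphi_K$ corresponds exactly to $\bar\varphi_{\bar K}$. Since $\I(K)=\Irr(K)$ and $\I(\bar K)=\Irr(\bar K)$, the irreducible constituents match up, whence $\theta$ is a constituent of $\varphi_K$ if and only if $\bar\theta$ is a constituent of $\bar\varphi_{\bar K}$; that is, $\varphi\in\I(G|\theta)$ if and only if $\bar\varphi\in\I(\bar G|\bar\theta)$.

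The hard part will be matching vertices in both directions, since Lemma~\ref{3.10}(b) only gives the forward implication $\varphi\in\I(G|Q)\Rightarrow\bar\varphi\in\I(\bar G|\bar Q)$. For the converse I would argue as follows. Let $\psi\in\I(\bar G|\bar Q)$ and write $\psi=\bar\varphi$ for a unique $\varphi\in\I(G)$; let $Q^{*}$ be a vertex of $\varphi$, so that $\varphi\in\I(G|Q^{*})$. By Lemma~\ref{3.10}(b) we have $L\le Q^{*}$ and $\bar\varphi\in\I(\bar G|\overline{Q^{*}})$. Thus $\overline{Q^{*}}$ and $\bar Q$ are both vertices of $\psi=\bar\varphi$, so by the uniqueness of vertices up to conjugacy (\cite[Theorem 5.17]{I2018}) they are $\bar G$-conjugate. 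Since $L\le Q^{*}$ and $L\le Q$, this conjugacy lifts: if $\overline{Q^{*}}^{\,\bar g}=\bar Q$ then $(Q^{*})^{g}L=QL$, and as $L$ is contained in both $(Q^{*})^{g}$ and $Q$ this forces $(Q^{*})^{g}=Q$. Hence $Q$ is itself a vertex of $\varphi$, i.e. $\varphi\in\I(G|Q)$. Together with the forward implication this shows $\varphi\mapsto\bar\varphi$ restricts to a bijection $\I(G|Q)\to\I(\bar G|\bar Q)$.

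Combining the two paragraphs, the bijection $\varphi\mapsto\bar\varphi$ carries $\I(G|Q)\cap\I(G|\theta)$ onto $\I(\bar G|\bar Q)\cap\I(\bar G|\bar\theta)$, which is precisely the asserted bijection $\I(G|Q,\theta)\to\I(\bar G|\bar Q,\bar\theta)$ and yields the equality of cardinalities. Here the hypotheses that $K\NM G$ is a normal $\pi$-subgroup and that $\theta$ is $G$-invariant are used only to guarantee that $\bar K\NM\bar G$ and $\bar\theta$ are well defined with the analogous structure in the quotient, so that the reduction can be applied repeatedly; the counting itself rests on the two matching properties established above.
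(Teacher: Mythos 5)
The paper does not prove this statement at all: it is quoted verbatim as Theorem 6.31 of Isaacs' book \cite{I2018}, so there is no in-paper argument to compare yours against. Judged on its own, your proof is correct and self-contained. The reduction to the canonical bijection $\varphi\mapsto\bar\varphi$ of Lemma \ref{3.10}(a) is legitimate since $L\le Q$ forces $L$ to be a normal $\pi'$-subgroup; the matching of the ``lying over $\theta$'' condition via the isomorphism $K\cong KL/L$ (using $K\cap L=1$ and $\I(K)=\Irr(K)$ for the $\pi$-group $K$) is fine; and you correctly isolate the one genuinely nontrivial point, namely the converse direction for vertices, which you settle by taking an arbitrary vertex $Q^{*}$ of the preimage, invoking conjugacy of vertices in $\bar G$, and lifting the conjugation using that $L$ is contained in both $(Q^{*})^{g}$ and $Q$ so that $(Q^{*})^{g}L=QL$ forces $(Q^{*})^{g}=Q$. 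Two small remarks: your argument actually yields more than the stated equality of cardinalities (a canonical bijection $\I(G|Q,\theta)\to\I(\bar G|\bar Q,\bar\theta)$), and, as you observe, the $G$-invariance of $\theta$ plays no real role in it — it is carried in the hypotheses only because of how the lemma is applied in Step 2 of the proof of Theorem \ref{thm}.
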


\subsection{Vertices and character-triple isomorphisms}
By a \textbf{character triple} $(G,N,\theta)$, we mean that $G$ is a group, $N\NM G$ and $\theta\in\Irr(N)$ is $G$-invariant. For the definition and properties of character-triple isomorphisms, see Chapter 1 of \cite{I2018} or Chapter 11 of \cite{I1976}.

\begin{lem}[{\cite[Lemma 3.11]{I2018}}]\label{3.11}
Let $(G, N, \theta)$ be a character triple, where $G$ is a group and $N$ is a $\pi$-subgroup.
Then there exists a character triple $(G^{*}, N^{*}, \theta^{*})$ such that $N^{*} \le \mathbf{Z}(G^{*})$ is also a $\pi$-subgroup and $(G^{*}, N^{*}, \theta^{*})$ is isomorphic to $(G, N, \theta)$.
\end{lem}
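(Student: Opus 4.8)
The plan is to adapt the classical construction showing that every character triple is isomorphic to one whose normal subgroup is central and cyclic, and then to check that under our hypothesis the cyclic group produced is a $\pi$-group. Concretely, I would fix a projective representation $\mathcal{P}$ of $G$ associated with $\theta$ and let $\alpha$ be its factor set. Since $\theta$ is $G$-invariant, $\alpha$ is constant on cosets of $N$ and so descends to a factor set of $\bar G=G/N$; write $\omega\in H^2(\bar G,\mathbb{C}^\times)$ for its cohomology class. The standard machinery then produces, from $\omega$, a central extension with kernel $N^{*}\le\mathbf{Z}(G^{*})$ cyclic of order $|\omega|$, together with a faithful linear $\theta^{*}\in\Irr(N^{*})$, in such a way that $(G^{*},N^{*},\theta^{*})$ is isomorphic to $(G,N,\theta)$. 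Thus everything reduces to the single claim that $|\omega|$ is a $\pi$-number, for then $N^{*}$ is a cyclic $\pi$-group as required.

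To establish that claim, I would first record that $\theta(1)$ and the determinantal order $o(\theta)$ are both $\pi$-numbers, since each divides $|N|$ and $N$ is a $\pi$-group. Now fix a prime $q\in\pi'$; it suffices to show that the $q$-primary part of $\omega$ vanishes. Let $\bar P$ be a Sylow $q$-subgroup of $\bar G$ and let $P\le G$ be its preimage, so that $N\NM P$, the quotient $P/N=\bar P$ is a $q$-group, and $\theta$ remains $P$-invariant. Because $|P:N|$ is a power of $q\in\pi'$ while $\theta(1)o(\theta)$ is a $\pi$-number, we have $\gcd\bigl(|P:N|,\theta(1)o(\theta)\bigr)=1$, and $P/N$ is solvable; the coprime extension theorem then provides an extension of $\theta$ to $P$. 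Hence the cohomology class of the triple $(P,N,\theta)$ is trivial, and this class is precisely the restriction of $\omega$ to $\bar P$.

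Finally, by restriction–corestriction the map $H^2(\bar G,\mathbb{C}^\times)\to H^2(\bar P,\mathbb{C}^\times)$ is injective on $q$-primary components, so triviality of $\mathrm{res}_{\bar P}(\omega)$ forces the $q$-part of $\omega$ to vanish. As $q\in\pi'$ was arbitrary, $|\omega|$ has no $\pi'$-divisors, which is exactly what was needed.

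I expect the main obstacle to lie in the middle paragraph: one must verify carefully that restricting $\omega$ to $\bar P$ genuinely yields the class of $(P,N,\theta)$, and one must invoke the coprime extension theorem correctly, using that a $q$-group is solvable so that its hypotheses are met. The outer central-extension construction and the closing restriction–corestriction step are standard, so the real content is the conversion of the $\pi$-group hypothesis on $N$ into the vanishing of the $\pi'$-part of $\omega$.
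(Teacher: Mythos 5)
The paper does not prove this lemma at all --- it is quoted verbatim from Isaacs' \emph{Characters of Solvable Groups} (Lemma 3.11 there) and used as a black box --- so there is no in-paper proof to compare against; I can only assess your argument on its own terms, and it is correct. Your reduction is the right one: run the standard central-extension construction (Isaacs, \emph{Character Theory}, Theorem 11.28), which yields $N^{*}$ cyclic of order dividing the order of the class $\omega\in H^{2}(G/N,\mathbb{C}^{\times})$ once a cocycle with values in the $|\omega|$-th roots of unity is chosen, and then show $|\omega|$ is a $\pi$-number. Your route to that last point --- extend $\theta$ to the preimage of a Sylow $q$-subgroup of $G/N$ for each $q\in\pi'$ via the coprime extension theorem (legitimate, since $|P:N|$ is a $q$-power coprime to the $\pi$-number $o(\theta)\theta(1)$ and a $q$-group is solvable), then kill the $q$-part of $\omega$ by restriction--corestriction --- is sound; the one step you should make explicit is that the projective representation must be chosen (as Theorem 11.2 of Isaacs allows) so that its factor set really is constant on cosets of $N$, which is not automatic for an arbitrary associated projective representation. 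For comparison, the cited source reaches the same conclusion more directly and without any cohomological transfer: taking determinants of the projective representation shows that $\delta(g)=\det\mathcal{P}(g)$ satisfies $\delta(ng)=\det\theta(n)\,\delta(g)$, so $\delta^{o(\theta)}$ descends to $G/N$ and exhibits $\beta^{\,\theta(1)o(\theta)}$ as a coboundary; hence $|\omega|$ divides $\theta(1)o(\theta)$, which is a $\pi$-number because both factors divide $|N|$. Your argument buys nothing extra here but is perfectly valid; the determinant argument is shorter and more elementary.
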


The irreducible $\pi$-partial characters behave very well under the isomorphism of character triples.
\begin{lem}[{\cite[Lemmas 6.21 and 6.32]{I2018}}]\label{6.21}
Let $(G, N, \theta)$ and $(G^{*}, N^{*}, \theta^{*})$ be isomorphic character triples, where $G$ and $G^{*}$ are $\pi$-separable and $N$ and $N^{*}$ are $\pi$-groups. Then
\begin{enumerate}[label*=\upshape(\alph*)]
\item There exists a unique bijection $\varphi \mapsto \varphi^*$ from $\I(G|\theta)$ onto $\I(G^*|\theta^*)$ such that if $\chi$ is a lift of $\varphi$ in $\Irr(G|\theta)$ and $\chi^* \in \Irr(G^*|\theta^*)$ corresponds to $\chi$ under the character-triple isomorphism, then $\chi^*$ is a lift of $\varphi^*$. In particular, we have $\varphi(1)_{\pi'}=\varphi^*(1)_{\pi'}$.
\item Let $Q \le G$ and $Q^* \le G^{*}$ be $\pi'$-groups, and assume that the intermediate subgroups $NQ$ and $N^{*}Q^{*}$ correspond via the character-triple isomorphism.
Then $\varphi\in\I(G|Q,\theta)$ if and only if  $\varphi^*\in\I(G^*|Q^*,\theta^*)$.
In particular, we have $|\I(G|Q,\theta)|=|\I(G^*|Q^*,\theta^*)|$.
\end{enumerate}
\end{lem}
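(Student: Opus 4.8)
The plan is to transport the entire $\pi$-partial character theory across the given isomorphism of character triples. First I would fix the structure that a character-triple isomorphism provides: a group isomorphism $G/N \to G^{*}/N^{*}$ together with, for every intermediate subgroup $N \le H \le G$ and its correspondent $H^{*}$, a bijection $\sigma_H \colon \Irr(H|\theta) \to \Irr(H^{*}|\theta^{*})$ that preserves degree ratios, $\sigma_H(\chi)(1)/\theta^{*}(1) = \chi(1)/\theta(1)$, and that is compatible with restriction and induction between intermediate subgroups. Since $N$ and $N^{*}$ are $\pi$-groups, $\theta(1)$ and $\theta^{*}(1)$ are $\pi$-numbers, so the degree-ratio identity forces $\chi(1)_{\pi'} = \sigma_G(\chi)(1)_{\pi'}$; evaluating at the identity (a $\pi$-element) this yields the ``in particular'' clause of (a) once the bijection is in place. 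By Lemma \ref{3.11} I may also replace the starred triple so that $N^{*} \le \mathbf{Z}(G^{*})$ is a central $\pi$-group, which makes $\sigma$ available in the concrete language of projective representations lifted from $G/N$.

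For part (a) the map is essentially forced on us. Every $\varphi \in \I(G|\theta)$ has a lift $\chi \in \Irr(G|\theta)$, that is, an ordinary character with $\chi^0 = \varphi$ lying over $\theta$, so I would set $\varphi^{*} := \sigma_G(\chi)^0$ and then verify three things: that $\varphi^{*}$ lies in $\I(G^{*})$, that it is independent of the chosen lift, and that $\varphi \mapsto \varphi^{*}$ is a bijection. I would argue by induction on $|G:N|$, the base case $G = N$ being trivial because on a $\pi$-group $\chi^0 = \chi$ and lifts are unique. For the inductive step the decisive input is that $\sigma$ commutes with restriction and induction: this lets me match the decomposition of $\chi^0$ into irreducible $\pi$-partial characters on the two sides, so that the nonnegative integer decomposition numbers $d_{\chi\varphi}$, defined by $\chi^0 = \sum_{\varphi} d_{\chi\varphi}\varphi$, are preserved by $\sigma$. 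Preservation of this decomposition matrix simultaneously shows that $\sigma_G(\chi)^0$ is irreducible when $\chi^0$ is, that two lifts of the same $\varphi$ are sent to characters with the same $\pi$-restriction, and that the induced map between the triangulating bases $\I(G|\theta)$ and $\I(G^{*}|\theta^{*})$ is a bijection; bijectivity also follows formally by running the construction for the inverse isomorphism.

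For part (b) I would first rewrite membership in $\I(G|Q,\theta)$ in a form the correspondence can detect. By the definition of vertex, $\varphi \in \I(G|Q)$ means $\varphi = \eta^{G}$ for some $\eta \in \I(U)$ of $\pi$-degree with $Q$ a Hall $\pi'$-subgroup of $U$; since $N$ and $N^{*}$ are $\pi$-groups and the intermediate subgroups $NQ$ and $N^{*}Q^{*}$ correspond, $Q$ is a Hall $\pi'$-subgroup of $NQ$ matching $Q^{*}$ in $N^{*}Q^{*}$. The plan is to prove $\varphi \in \I(G|Q,\theta) \iff \varphi^{*} \in \I(G^{*}|Q^{*},\theta^{*})$ by induction on $|G|$, reducing through the Clifford machinery of Lemmas \ref{5.11} and \ref{6.33}: for a suitable $K \NM G$ one passes to the stabilizer $G_\tau$ and the Clifford correspondent $\varphi_\tau$, and the compatibility of $\sigma$ with this reduction carries the inducing data across, because intermediate subgroups, and hence the inducing subgroups carrying the prescribed Hall $\pi'$-subgroup, correspond. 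Combined with part (a) this produces the bijection $\I(G|Q,\theta) \to \I(G^{*}|Q^{*},\theta^{*})$ and the equality of cardinalities.

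The hardest point is the well-definedness in (a): showing that $\sigma$ sends all lifts of a single $\varphi$ to lifts of one common $\varphi^{*}$, equivalently that $\sigma$ preserves the $\pi$-partial-character content of each ordinary character. This is exactly where one must use more than degree ratios, namely the full compatibility of the isomorphism with restriction and induction and, after the central reduction of Lemma \ref{3.11}, the fact that restriction to $\pi$-elements is governed by data on $G/N$ that the isomorphism preserves. In (b) the analogous difficulty is that the vertex is defined by an existential quantifier over inducing subgroups, so the real work is verifying that the correspondence of intermediate subgroups is rich enough to match the inducing pair $(U,\eta)$ with a starred pair $(U^{*},\eta^{*})$ whose Hall $\pi'$-subgroup is exactly $Q^{*}$.
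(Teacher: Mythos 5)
First, a structural point: the paper does not prove this statement at all --- it is quoted from \cite{I2018} (Lemmas 6.21 and 6.32) as a known ingredient, so there is no in-paper argument to compare with, and your proposal has to stand on its own as a proof of the cited result. It does not yet do so, and the gap is exactly at the point you yourself flag as ``the hardest'': well-definedness in (a). Your proposed mechanism --- that $\sigma$ commutes with restriction and induction between intermediate subgroups and therefore ``preserves the decomposition matrix'' $\chi^0=\sum_\varphi d_{\chi\varphi}\varphi$ --- is a non sequitur. Restriction to the set $G^0$ of $\pi$-elements is not restriction to an intermediate subgroup: $G^0$ is a union of classes, not a subgroup containing $N$, and none of the defining axioms of a character-triple isomorphism (compatibility with restriction to subgroups $N\le H\le G$ and with multiplication by characters of $H/N$) gives any control over character values on $\pi$-elements. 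Corresponding characters $\chi$ and $\chi^*$ generally take different values on corresponding cosets, and after the central reduction of Lemma \ref{3.11} the correspondence carries a genuinely projective twist, so ``data on $G/N$ that the isomorphism preserves'' does not by itself determine $\chi^0$. Showing that all lifts of a single $\varphi$ are sent to lifts of one common \emph{irreducible} $\varphi^*$ is precisely the content of Isaacs' Lemma 6.21; its proof is a substantive argument using the Clifford theory of $\pi$-partial characters and the existence of lifts (the $\pi$-analogue of Fong--Swan), not a formal consequence of the triple-isomorphism axioms. Your sketch names plausible tools but never runs the induction, and the one inference it does make at the critical step is invalid as stated. (By contrast, \emph{uniqueness} in (a) is easy once existence is known, since every $\varphi$ has a lift; and your derivation of $\varphi(1)_{\pi'}=\varphi^*(1)_{\pi'}$ from degree ratios is fine, granted the bijection.)

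Part (b) has the same character. The vertex of $\varphi$ is witnessed by a pair $(U,\eta)$ with $\varphi=\eta^G$, $\eta(1)$ a $\pi$-number and $Q\in\Hall_{\pi'}(U)$, and $U$ need not contain $N$; it is therefore not an intermediate subgroup, the triple isomorphism does not act on it, and replacing $U$ by $NU$ destroys the irreducibility of the inducing data. You concede that matching $(U,\eta)$ with a starred pair is ``the real work,'' but you supply no mechanism for it, and your appeal to Lemmas \ref{5.11} and \ref{6.33} is not anchored to any specific normal subgroup or induction scheme. Note also that part (a) already determines $|Q|$: by Lemma \ref{5.18}, $\varphi(1)_{\pi'}|Q|=|G|_{\pi'}$, and $\varphi(1)_{\pi'}=\varphi^*(1)_{\pi'}$. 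So what is genuinely at stake in (b) is identifying the \emph{conjugacy class} of the vertex, and this is where the hypothesis that $NQ$ and $N^*Q^*$ correspond must enter essentially (compare Lemma \ref{6.30}, where the condition $KQ\NM G$ does real work); your outline restates that hypothesis but never uses it. In short, both substantive assertions --- preservation of the $\pi$-partial decomposition and transport of the vertex datum --- are asserted rather than proved, and the justification offered for the first one would fail.
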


\subsection{Above the Glauberman-Isaacs correspondence}
The following deep theorem was proved by Dade and Puig in the case where $Q$ is solvable,
which is thoroughly explained in \cite{T2008} as mentioned at the end of page 2532 of \cite{NS2021}.
If $Q$ is not solvable, then it must have even order by the Feit-Thompson theorem,
and thus $K$ has odd order. In this case, Lewis \cite{L1997} showed that the result also holds.

\begin{lem}[Dade-Puig-Lewis]\label{DP}
Let $G$ be an arbitrary group with $K\NM G$ and $Q\le G$, and assume that $KQ\NM G$ and that $(|K|,|Q|)=1$.
Let $\theta\in\Irr(K)$ be $Q$-invariant, and write $\theta^*\in\Irr(\CC_K(Q))$
for its Glauberman-Isaacs correspondent.
Then $(G_\theta,K,\theta)$ and $(\NN_G(Q)_{\theta^*},\CC_K(Q),\theta^*)$ are isomorphic character triples.
\end{lem}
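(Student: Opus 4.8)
The plan is to assemble the character-triple isomorphism in three stages: first fix the underlying isomorphism of quotient groups by elementary coprime-action arguments, then reduce the order of $Q$ to a prime, and finally treat that base case, where the genuine difficulty lies.

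First I would erect the group-theoretic scaffolding. Since $KQ\NM G$ and $(|K|,|Q|)=1$, the subgroup $Q$ is a complement to $K$ in $KQ$; the Schur--Zassenhaus theorem, whose conjugacy part applies because one of $K,Q$ has odd order and is therefore solvable by Feit--Thompson, combined with a Frattini argument gives $G=K\,\NN_G(Q)$. A Dedekind-type argument then yields $G_\theta=K\,\NN_G(Q)_\theta$, while coprimality forces $\NN_K(Q)=\CC_K(Q)$, since $k\in K$ normalizing $Q$ satisfies $[k,Q]\le K\cap Q=1$. Finally, every $n\in\NN_G(Q)$ normalizes $K$, $Q$, and hence $C:=\CC_K(Q)$, and the Glauberman--Isaacs correspondence commutes with such automorphisms, so $\NN_G(Q)_\theta=\NN_G(Q)_{\theta^*}$. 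Writing $N=\NN_G(Q)$ and combining these facts gives
\[
G_\theta/K \;=\; N_\theta K/K \;\cong\; N_\theta/\CC_K(Q) \;=\; N_{\theta^*}/C ,
\]
the group isomorphism underlying the sought character-triple isomorphism; moreover it matches each intermediate subgroup $K\le U\le G_\theta$ with $C\le U\cap N_{\theta^*}\le N_{\theta^*}$, along which the required character bijections must be built.

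Next I would reduce to the case that $Q$ has prime order. When $Q$ is solvable, refine a chief series to a chain $1=Q_0\NM Q_1\NM\cdots\NM Q_n=Q$ with each factor of prime order and set $K_i=\CC_K(Q_i)$; standard coprime-action facts show $Q_i/Q_{i-1}$ acts coprimely on $K_{i-1}$ with fixed points $K_i$, and the Glauberman correspondence factors as the composite of the prime-order correspondences along this chain. Because character-triple isomorphisms compose, it then suffices to construct the isomorphism when $|Q|$ is prime. When $Q$ is non-solvable this stepwise reduction is unavailable; here $|K|$ is odd by Feit--Thompson, the relevant map is the Isaacs correspondence, and one argues directly as in Lewis \cite{L1997}.

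The main obstacle is the base case. The bijection sending a $Q$-invariant $\theta\in\Irr(K)$ to its correspondent $\theta^*\in\Irr(C)$ is classical, but upgrading it to a character-triple isomorphism demands that it transport all of the Clifford-theoretic data coherently: for each intermediate $U$ one must produce compatible projective representations of $U$ and of $U\cap N_{\theta^*}$ whose factor sets agree, so that the induced bijections $\Irr(U|\theta)\to\Irr(U\cap N_{\theta^*}|\theta^*)$ respect ramification, multiplication by characters of the common quotient, and ratios of degrees. Matching these factor sets through the Glauberman--Isaacs map is exactly the deep input furnished by Dade and Puig in the solvable case, as explained in Turull \cite{T2008}, and by Lewis \cite{L1997} in general; I would invoke their construction to finish.
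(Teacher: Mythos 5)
The paper gives no proof of this lemma: it is imported verbatim as an external deep theorem, with the solvable-$Q$ case attributed to Dade and Puig as explained by Turull \cite{T2008} and the remaining case (where $|K|$ is odd by Feit--Thompson) to Lewis \cite{L1997}. Your sketch supplies correct elementary scaffolding (the Frattini factorization $G_\theta=K\,\NN_G(Q)_\theta$, the identification $\NN_K(Q)=\CC_K(Q)$, the equality $\NN_G(Q)_\theta=\NN_G(Q)_{\theta^*}$, and the underlying quotient isomorphism) and then defers the genuinely hard content---the compatibility of factor sets under the Glauberman--Isaacs correspondence---to exactly the same sources, so it is consistent with the paper's treatment.
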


\subsection{Weights and vertices}
One of essential ingredients in our proofs of Theorems \ref{thm:A} and \ref{thm:B} is Theorem \ref{basic} below,
whose proof relies on the following lemma.

\begin{lem}[{\cite[Lemma 6.30]{I2018}}]\label{6.30}
Let $K \NM G$, where $G$ is $\pi$-separable and $K$ is a $\pi$-subgroup,
and suppose $Q \le G$ is a $\pi'$-subgroup such that $KQ \NM G$.
Let $\theta \in \Irr(K)$ be $Q$-invariant and $\varphi \in \I(G|\theta)$.
Then the following hold.
\begin{enumerate}[label={\upshape(\alph*)}]
\item There exists a vertex $P$ of $\varphi$ such that $Q\le P\le \NN_G(Q)$.
In particular, $Q$ is a vertex for $\varphi$ if and only if  $\varphi(1)_{\pi'}=|G:Q|_{\pi'}$.

\item Furthermore, any two of such vertices $P$ for $\varphi$ are $\NN_G(Q)$-conjugate.
\end{enumerate}
\end{lem}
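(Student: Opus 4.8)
The plan is to prove the existence assertion in (a) first, since it is the heart of the statement, and then to read off both the ``in particular'' clause of (a) and the conjugacy statement (b) from it together with the degree formula of Lemma~\ref{5.18} and the fact (\cite[Theorem 5.17]{I2018}) that the vertices of $\varphi$ form a single $G$-class. For the existence in (a) I would proceed in two reductions: a Clifford reduction to make $\theta$ invariant, followed by a passage through the character-triple isomorphism above the Glauberman--Isaacs correspondence, whose whole purpose is to replace the configuration by one in which $Q$ is \emph{normal}, so that a vertex containing $Q$ appears for free.

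The first reduction is routine. Since $K\NM G$ and $\theta\in\I(K)$, the Clifford correspondence (Lemma~\ref{5.11}) gives $\varphi=\psi^G$ with $\psi=\varphi_\theta\in\I(G_\theta|\theta)$, and transitivity of induction shows that every vertex of $\psi$ is a vertex of $\varphi$. As $Q\le G_\theta$ and $KQ\NM G_\theta$, the hypotheses are inherited by $G_\theta$, where now $\theta$ is invariant, and a vertex $R$ of $\psi$ with $Q\le R\le\NN_{G_\theta}(Q)\le\NN_G(Q)$ is automatically a vertex of $\varphi$. Hence I may assume $\theta$ is $G$-invariant.

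The main step is then to apply Lemma~\ref{DP} to the coprime data $K\NM G$, $Q\le G$, $KQ\NM G$: writing $C=\CC_K(Q)$ and $\theta^*\in\Irr(C)$ for the Glauberman--Isaacs correspondent, I obtain an isomorphism of character triples $(G,K,\theta)\cong(H,C,\theta^*)$ with $H=\NN_G(Q)_{\theta^*}$, and since $\theta$ is $G$-invariant the equivariance of the correspondence makes $\theta^*$ invariant under $\NN_G(Q)$, so $H=\NN_G(Q)$. Now $Q$ is normal in $H$. By Lemma~\ref{6.21}(a) the isomorphism carries $\varphi$ to some $\varphi^*\in\I(H|\theta^*)$; choosing any vertex $P$ of $\varphi^*$ and noting $Q\le\OO_{\pi'}(H)$, Lemma~\ref{3.10}(b) forces $Q\le P\le H=\NN_G(Q)$. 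Finally the Frattini argument gives $G=K\NN_G(Q)$ and coprimality gives $K\cap\NN_G(Q)=C$, so the subgroup lattice isomorphism underlying the triple isomorphism is the natural one $M\leftrightarrow M\cap\NN_G(Q)$; in particular $KP$ corresponds to $CP$. Lemma~\ref{6.21}(b) then transfers the vertex: $\varphi^*\in\I(H|P,\theta^*)$ yields $\varphi\in\I(G|P,\theta)$, so $P$ is a vertex of $\varphi$ with $Q\le P\le\NN_G(Q)$. I expect this transfer to be the delicate point, and the main obstacle of the whole argument: one must verify that the group isomorphism provided by Lemma~\ref{DP} is compatible with $Q$ (equivalently that $KQ$ corresponds to $CQ$), so that a vertex manufactured ``downstairs'' genuinely contains $Q$ and lands inside $\NN_G(Q)$ ``upstairs.''

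The two remaining consequences are comparatively formal. For the ``in particular'' clause, Lemma~\ref{5.18} shows every vertex of $\varphi$ has order $|G|_{\pi'}/\varphi(1)_{\pi'}$; hence if $\varphi(1)_{\pi'}=|G:Q|_{\pi'}$ then the vertex $P$ produced in (a) satisfies $|P|=|Q|$, so $Q=P$ is a vertex, while the converse is immediate from Lemma~\ref{5.18}. For (b), let $P_1,P_2$ be vertices of $\varphi$ with $Q\le P_i\le\NN_G(Q)$. By \cite[Theorem 5.17]{I2018} they are $G$-conjugate, say $P_1^g=P_2$. Then $Q^g\le P_1^g=P_2$, and since $KQ\NM G$ also $Q^g\le(KQ)^g=KQ$, so $Q^g\le KQ\cap P_2$; but $Q$ is a Hall $\pi'$-subgroup of $KQ$ contained in the $\pi'$-subgroup $KQ\cap P_2$ of $KQ$, whence $KQ\cap P_2=Q$ and therefore $Q^g\le Q$, forcing $Q^g=Q$ and $g\in\NN_G(Q)$. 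Thus $P_1$ and $P_2$ are $\NN_G(Q)$-conjugate, completing the proof.
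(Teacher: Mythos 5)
The paper does not prove this statement: it is imported verbatim from \cite[Lemma~6.30]{I2018} and used as a black box (indeed, the paper's own Theorem~\ref{basic} \emph{consumes} Lemma~\ref{6.30}(a)). So there is no in-paper proof to compare against, and your argument has to be judged on its own. Most of it checks out. The Clifford reduction is fine: the hypotheses pass to $G_\theta$, and transitivity of induction makes any vertex of $\varphi_\theta$ a vertex of $\varphi$, with $\NN_{G_\theta}(Q)\le\NN_G(Q)$. The identification $\NN_G(Q)_{\theta^*}=\NN_G(Q)_\theta$ via equivariance of the Glauberman--Isaacs map is exactly what the paper itself uses in Theorem~\ref{basic}. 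The derivation of $Q\le P$ downstairs from $Q\le\OO_{\pi'}(\NN_G(Q))$ and Lemma~\ref{3.10}(b) is correct, the ``in particular'' clause follows as you say from Lemma~\ref{5.18} plus the existence statement, and your proof of (b) --- $Q^g\le KQ\cap P_2=Q$ because $Q$ is a Hall $\pi'$-subgroup of the normal subgroup $KQ$ --- is clean and complete.

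The one genuine soft spot is the one you flagged, and your justification for it does not actually close it. To transfer the vertex through Lemma~\ref{6.21}(b) you need $KP$ and $\CC_K(Q)P$ to correspond \emph{under the particular character-triple isomorphism furnished by Lemma~\ref{DP}}. The Frattini argument and the computation $K\cap\NN_G(Q)=\NN_K(Q)=\CC_K(Q)$ show only that a natural isomorphism $\NN_G(Q)/\CC_K(Q)\to G/K$ exists; they say nothing about whether the triple isomorphism of Lemma~\ref{DP} lies over it, and as stated in this paper Lemma~\ref{DP} asserts mere existence of \emph{some} isomorphism of triples. The fact you need is true --- Dade, Puig, Turull and Lewis all construct the isomorphism over precisely this natural map, and Turull's formulation makes it explicit --- but it is a strengthening of the citation as recorded here, not a consequence of Frattini. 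It is worth noting that the paper's own use of Lemma~\ref{DP} in Theorem~\ref{basic} is engineered to dodge exactly this issue: there one detects ``$Q$ is a vertex'' purely numerically via $\varphi(1)_{\pi'}=|G:Q|_{\pi'}$, which is preserved by Lemma~\ref{6.21}(a) with no reference to the subgroup lattice --- but that numerical criterion is the ``in particular'' clause of the very lemma you are proving, so it is not available to you. Your proof is correct once Lemma~\ref{DP} is cited in its precise (natural-isomorphism) form; you should say that this is what you are invoking rather than presenting it as a deduction.
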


\begin{thm}\label{basic}
Let $K \NM G$, where $G$ is $\pi$-separable and $K$ is a $\pi$-subgroup,
and suppose $Q \le G$ is a $\pi'$-subgroup such that $KQ \NM G$.
For any $Q$-invariant character $\tau\in\Irr(K)$, we have
$$|\I(G|Q,\tau)|=|\I({\bf N}_G(Q)|Q,\tau^*)|$$
where $\tau^*$ is the image of $\tau$ under the Glauberman-Isaacs correspondence
$\Irr_Q(K)\to \Irr({\bf C}_K(Q))$.
\end{thm}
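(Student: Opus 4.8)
The goal is to establish a bijection (or at least an equality of cardinalities) between $\I(G|Q,\tau)$ and $\I(\NN_G(Q)|Q,\tau^*)$, where $\tau^*$ is the Glauberman--Isaacs correspondent of $\tau$. The natural strategy is to route everything through the Dade--Puig--Lewis character-triple isomorphism (Lemma \ref{DP}), which is tailor-made for exactly the hypotheses we have: $K\NM G$ is a $\pi$-group, $Q$ is a $\pi'$-group with $KQ\NM G$, $(|K|,|Q|)=1$ (automatic since $K$ is a $\pi$-group and $Q$ a $\pi'$-group), and $\tau$ is $Q$-invariant. That lemma gives an isomorphism of character triples
$$(G_\tau,K,\tau)\cong(\NN_G(Q)_{\tau^*},\CC_K(Q),\tau^*).$$
First I would reduce to the stabilizers. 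Since $\tau$ is $Q$-invariant, $\NN_G(Q)$ normalizes the set $\{\tau\}$ up to nothing, and in fact $\NN_G(Q)_{\tau^*}=\NN_{G_\tau}(Q)=\NN_G(Q)\cap G_\tau$ by the standard property of the Glauberman correspondence (the correspondent $\tau^*$ is fixed by exactly the $\NN_G(Q)$-part of the stabilizer of $\tau$). This lets me replace the outer group by the stabilizer without losing any $\pi$-partial characters.

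\emph{Carrying out the steps.} The argument I have in mind proceeds in two stages.

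\begin{enumerate}[label={\upshape(\arabic*)}]
\item \textbf{Transport via the character-triple isomorphism.} Both $K$ and $\CC_K(Q)$ are $\pi$-groups, so Lemma \ref{6.21} applies to the isomorphic triples of Lemma \ref{DP}. Part (a) yields a bijection $\I(G_\tau|\tau)\to\I(\NN_G(Q)_{\tau^*}|\tau^*)$ preserving $\pi'$-parts of degrees. To track vertices, I would apply part (b): I need the intermediate subgroups $KQ$ and $\CC_K(Q)Q$ to correspond under the isomorphism, which is precisely the kind of compatibility the Dade--Puig--Lewis construction respects. With $Q\le G_\tau$ playing the role of the $\pi'$-group on the left and the same $Q\le\NN_G(Q)_{\tau^*}$ on the right, Lemma \ref{6.21}(b) gives
$$|\I(G_\tau|Q,\tau)|=|\I(\NN_G(Q)_{\tau^*}|Q,\tau^*)|.$$

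\item \textbf{Clifford theory over $K$.} It remains to compare $\I(G|Q,\tau)$ with $\I(G_\tau|Q,\tau)$ and likewise on the $\NN_G(Q)$ side. On the left this is Lemma \ref{6.33}(b): since $\tau$ is $Q$-invariant, if I can arrange $G=G_\tau\NN_G(Q)$ then induction gives a bijection $\I(G_\tau|Q,\tau)\to\I(G|Q,\tau)$. The Frattini-type factorization $G=G_\tau\NN_G(Q)$ should follow because $Q$ is a Hall $\pi'$-subgroup of $KQ\NM G$ and all such are conjugate, so a counting/conjugacy argument forces the product. On the right, the corresponding statement over $\CC_K(Q)$ identifies $\I(\NN_G(Q)_{\tau^*}|Q,\tau^*)$ with $\I(\NN_G(Q)|Q,\tau^*)$ via the analogous application of Lemma \ref{6.33}(b), now inside $\NN_G(Q)$ with normal $\pi$-subgroup $\CC_K(Q)$.
\end{enumerate}

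\textbf{The main obstacle.} The delicate point is the hypothesis of Lemma \ref{6.21}(b): I must verify that the intermediate subgroups $KQ$ (between $K$ and $G_\tau$) and $\CC_K(Q)Q$ (between $\CC_K(Q)$ and $\NN_G(Q)_{\tau^*}$) genuinely correspond under the Dade--Puig--Lewis isomorphism, and not merely that they have the right orders. This is where the specific structure of that isomorphism must be invoked, since a generic character-triple isomorphism need not match up a prescribed pair of overgroups; one has to check that the construction sends the $\pi'$-subgroup $Q$ on one side to its counterpart on the other in a way compatible with the subgroup correspondence. Verifying the Frattini factorization $G=G_\tau\NN_G(Q)$ is routine by Hall-subgroup conjugacy, but checking the subgroup-correspondence compatibility is the real content; once it is in place, concatenating the bijections from steps (1) and (2) yields $|\I(G|Q,\tau)|=|\I(\NN_G(Q)|Q,\tau^*)|$ as desired.
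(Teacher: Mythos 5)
Your overall architecture (reduce to stabilizers via Lemma \ref{6.33}(b) and the Frattini factorization, then transport across the Dade--Puig--Lewis triple isomorphism) matches the paper's, and Step (2) is fine: $G=K\NN_G(Q)=G_\tau\NN_G(Q)$ follows from conjugacy of Hall $\pi'$-subgroups of $KQ\NM G$, and the reduction on the normalizer side is immediate since $\NN_G(Q)$ normalizes itself. The problem is Step (1). You correctly identify the sticking point --- that Lemma \ref{6.21}(b) requires the intermediate subgroups $KQ$ and $\CC_K(Q)Q$ to \emph{correspond} under the triple isomorphism --- but you do not close it, and it cannot be closed from the statements available: Lemma \ref{DP} asserts only that $(G_\tau,K,\tau)$ and $(\NN_G(Q)_{\tau^*},\CC_K(Q),\tau^*)$ are isomorphic as character triples, and an abstract character-triple isomorphism carries no information about which prescribed overgroup of $K$ maps to which overgroup of $\CC_K(Q)$. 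So as written the proposal has a genuine gap exactly where you flag ``the real content.''

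The paper avoids this entirely by never invoking Lemma \ref{6.21}(b). Instead, after reducing to the case where $\tau$ is $G$-invariant, it uses Lemma \ref{6.21}(a), which gives a bijection $\I(G|\tau)\to\I(\NN_G(Q)|\tau^*)$ preserving the $\pi'$-part of degrees, and then Lemma \ref{6.30}(a), which under precisely the present hypotheses ($K\NM G$ a $\pi$-group, $KQ\NM G$, $\tau$ a $Q$-invariant character) characterizes ``$Q$ is a vertex of $\varphi$'' by the purely numerical condition $\varphi(1)_{\pi'}=|G:Q|_{\pi'}$; the same criterion applies inside $\NN_G(Q)$ over $\CC_K(Q)$ (note $\CC_K(Q)Q=\NN_{KQ}(Q)\NM\NN_G(Q)$), and $|G:Q|_{\pi'}=|\NN_G(Q):Q|_{\pi'}$ since $G=K\NN_G(Q)$ with $K$ a $\pi$-group. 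Since the bijection of Lemma \ref{6.21}(a) preserves $\pi'$-parts of degrees, it automatically restricts to a bijection $\I(G|Q,\tau)\to\I(\NN_G(Q)|Q,\tau^*)$. Replacing your appeal to Lemma \ref{6.21}(b) by this degree argument repairs the proof.
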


\begin{proof}
By the Frattini argument, we have $G=K{\bf N}_G(Q)=G_\tau{\bf N}_G(Q)$,
and by Lemma \ref{6.33}(b), we see that $|\I(G|Q,\tau)|=|\I(G_\tau|Q,\tau)|$.
Also, since ${\bf N}_G(Q)_{\tau^*}={\bf N}_G(Q)_\tau$, the same reason yields
$$|\I({\bf N}_G(Q)|Q,\tau^*)|=|\I({\bf N}_G(Q)_\tau|Q,\tau^*)|
=|\I({\bf N}_{G_\tau}(Q)|Q,\tau^*)|.$$
To complete the proof, therefore, we can assume without loss that $\tau$ is $G$-invariant.
Then, by Lemma \ref{DP}, the character triples $(G,K,\tau)$ and $({\bf N}_{G}(Q),{\bf C}_K(Q),\tau^*)$ are isomorphic,
and by Lemma \ref{6.21}(a), the map $\varphi\mapsto\varphi^*$ defines a bijection $\I(G|\tau)\to\I({\bf N}_{G}(Q)|\tau^*)$ such that $\varphi(1)_{\pi'}=\varphi^*(1)_{\pi'}$.
In this case, Lemma \ref{6.30}(a) tells us that $Q$ is a vertex for $\varphi$ if and only if
$$\varphi(1)_{\pi'}=|G:Q|_{\pi'}=|{\bf N}_G(Q):Q|_{\pi'},$$
and this is equivalent to saying that $\varphi^*(1)_{\pi'}=|{\bf N}_G(Q):Q|_{\pi'}$.
Thus $\varphi\in\I(G|Q,\tau)$ if and only if $\varphi^*\in\I({\bf N}_{G}(Q)|Q,\tau^*)$,
which implies that the above map $\varphi\mapsto\varphi^*$ is also a bijection
from $\I(G|Q,\tau)$ onto $\I({\bf N}_{G}(Q)|Q,\tau^*)$, and the result follows.
\end{proof}


\section{Proof of Theorems A and B}
In this section, we will establish Theorems \ref{thm:A} and \ref{thm:B} simultaneously from the introduction.
As usual, we need to prove the following ``relative version",
which clearly includes Theorems \ref{thm:A} and \ref{thm:B} (by taking $Z=1$).

\begin{thm}\label{thm}
Let $G$ be a $\pi$-separable group, and let $Q$ be a $\pi'$-subgroup of $G$.
Suppose that $Z$ is a normal $\pi$-subgroup of $G$ and that $\lambda\in\Irr(Z)$ is invariant in $G$.
Then the following hold.
\begin{enumerate}[label={\upshape(\alph*)}]
\item $|\I(G|Q,\lambda)|\le |\I(\NN_G(QZ)|Q,\lambda)|$.
\item Equality holds in (a) if $Q$ has the property that
whenever $Q<X$, where $X$ is a $\pi'$-subgroup of $G$,
then $Q<\NN_X(Q)$.
\end{enumerate}
\end{thm}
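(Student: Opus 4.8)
The plan is to argue by induction on $|G|$, treating (a) and (b) in parallel so that the hypothesis in (b) is invoked only at the one point where the inequality in (a) can be strict. First I would dispose of the degenerate situations: if $\I(G|Q,\lambda)=\emptyset$ then (a) is vacuous, and if $QZ\NM G$ then $\NN_G(QZ)=G$ and both sides coincide; so I may assume $\I(G|Q,\lambda)\ne\emptyset$ and $\NN_G(QZ)<G$. Since any $\varphi\in\I(G|Q)$ forces ${\bf O}_{\pi'}(G)\le Q$ by Lemma \ref{3.10}(b), and the bijection of Lemma \ref{3.10}(a) is compatible with vertices, with lying over $\lambda$, and with the normalizer growth condition, I can pass to $G/{\bf O}_{\pi'}(G)$ and assume ${\bf O}_{\pi'}(G)=1$. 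Then $K:={\bf O}_\pi(G)\ne 1$ satisfies $\CC_G(K)\le K$, and $Z\le K$ because $Z$ is a normal $\pi$-subgroup.

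The inductive engine is to choose $K\NM G$ with $Z\le K$ and $K/Z$ a chief factor of $G$ — an elementary abelian $p$-group — and to expand $|\I(G|Q,\lambda)|$ over $K$ by the relative form of Corollary \ref{cor}, summing $|\I(G_\tau|Q,\tau)|$ over representatives $\tau$ of the $\NN_G(Q)$-orbits of $Q$-invariant members of $\I(K|\lambda)$. The analogous expansion is carried out inside $N:=\NN_G(QZ)$ over the normal subgroup $K\cap N$, and the two sums are compared orbit by orbit. For every $\tau$ with $G_\tau<G$ the inductive hypothesis applied to $(G_\tau,Q,K,\tau)$ yields $|\I(G_\tau|Q,\tau)|\le|\I(\NN_{G_\tau}(QK)|Q,\tau)|$, with equality under the growth condition (which is inherited by $G_\tau$ since every $\pi'$-subgroup of $G_\tau$ is one of $G$); these terms are then matched with the corresponding terms of the $N$-side expansion via the Clifford correspondence of Lemma \ref{5.11} and Lemma \ref{6.33}.

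This leaves the case where $\tau$ is $G$-invariant, which is the heart of the argument. When $K/Z$ is a $\pi$-group, so that $K$ is a normal $\pi$-subgroup, I would use Lemma \ref{3.11} and Lemma \ref{6.21} to replace the triple $(G,K,\tau)$ by an isomorphic one in which $K$ is central, thereby moving the obstruction up one layer; the decisive input is Theorem \ref{basic}, whose Glauberman–Isaacs correspondence $\tau\mapsto\tau^*$ gives an exact bijection $\I(G|Q,\tau)\to\I(\NN_G(Q)|Q,\tau^*)$ precisely when $KQ\NM G$, a configuration one reaches once the relevant $\pi'$-part of $Q$ becomes normal modulo $K$. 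This case contributes equality and no slack.

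The genuine source of the inequality, and therefore the main obstacle, is the $\pi'$-layer: when $K/Z$ is a $\pi'$-chief factor, a $Q$-invariant character $\tau\in\I(K|\lambda)$ need not produce a $\pi$-partial character whose vertex is exactly $Q$, since lying over the $\pi'$-part can force the vertex to be a $\pi'$-subgroup strictly larger than $Q$. Controlling this — showing via Lemma \ref{6.30} and the vertex count built into Theorem \ref{basic} that the number of orbits surviving with vertex exactly $Q$ in $G$ never exceeds the number surviving in $N=\NN_G(QZ)$ — is where the inequality (a) is forced and where equality can fail. The hypothesis in (b), that $Q<\NN_X(Q)$ for every $\pi'$-subgroup $X>Q$, is exactly what rules out the self-normalizing overgroups responsible for the loss (as one already sees when $G$ is a $\pi'$-group with $Q=\NN_G(Q)<G$), so that every surviving orbit on the $N$-side is matched on the $G$-side and equality holds. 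I expect the careful bookkeeping of this $\pi'$-layer matching, performed simultaneously on both sides of the comparison, to be the most delicate part.
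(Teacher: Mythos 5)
Your proposal has the right general ingredients (a Clifford-theoretic decomposition over a normal subgroup, Theorem \ref{basic} as the bridge to the normalizer, induction), but it contains a genuine gap at exactly the point you yourself flag as ``the most delicate part'': the $\pi'$-chief-factor case. If $K/Z$ is a $\pi'$-chief factor, then $K$ is not a $\pi$-group, so neither the Glauberman--Isaacs correspondence, nor Lemma \ref{6.30}, nor Theorem \ref{basic} applies to the pair $(K,Q)$ --- all of these require $K$ to be a normal $\pi$-subgroup with $KQ\NM G$. You offer no actual argument for this layer, only the expectation that careful bookkeeping will work, and you locate the source of the inequality in (a) there. That diagnosis is not how the paper proceeds: the paper never decomposes over a $\pi'$-layer at all. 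It first reduces to $Z\le{\bf Z}(G)$ by a character-triple replacement (Lemma \ref{3.11}, Lemma \ref{6.21}), then kills ${\bf O}_{\pi'}(G)$, and the \emph{only} source of slack in (a) is the case ${\bf O}_{\pi'}(G)\not\le Q$, where the left side is empty by Lemma \ref{3.10}(b) while the right side need not be; the growth hypothesis of (b) is used precisely to show $({\bf O}_{\pi'}(G)\cap\NN_G(Q))\not\le Q$, forcing the right side to be empty too. After these reductions the Hall--Higman lemma gives $Z<K={\bf O}_\pi(G)$, and the decomposition of Corollary \ref{cor} is taken over this normal $\pi$-subgroup $K$, where Theorem \ref{basic} (applied inside $\NN_{G_\tau}(KQ)$, where $KQ$ \emph{is} normal) converts each summand on the normalizer side, and induction on $|G:Z|$ --- not on $|G|$ --- finishes, since $|G_\tau:K|<|G:Z|$ even when $G_\tau=G$.

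Two further concrete problems. First, for part (b) you dismiss the case $\I(G|Q,\lambda)=\emptyset$ as vacuous; it is vacuous for (a), but for (b) you must prove the normalizer side is then also empty, and this is exactly where the hypothesis $Q<\NN_X(Q)$ does its work in the paper (Step 2). Your sketch never returns to this case. Second, your induction on $|G|$ does not terminate in the case where $\tau$ is $G$-invariant and $K/Z$ is a $\pi$-chief factor: there $G_\tau=G$, and ``moving the obstruction up one layer'' via a central replacement is only a valid induction if the parameter is the index $|G:Z|$ of the central layer, which is the parameter the paper uses. So while your outline shares the paper's main tools, the step you defer is the one the paper's reductions are designed to eliminate, and as written the argument does not close.
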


\begin{proof}
We proceed by induction on $|G:Z|$.

\smallskip
\emph{Step 1. We may assume that $Z\le {\bf Z}(G)$, so that $\NN_G(QZ)=\NN_G(Q)$.}

By Lemma \ref{3.11}, we can choose a character triple $(G^*,Z^*,\lambda^*)$
isomorphic to $(G,Z,\lambda)$, where $Z^*\le {\bf Z}(G^*)$ is a $\pi$-subgroup. Since $G^*/Z^*\cong G/Z$ and $G$ is $\pi$-separable, we see that $G^*$ is also $\pi$-separable.

For each $\pi'$-subgroup $X$ of $G$,
we know that the intermediate subgroup $XZ$ corresponds to $(XZ)^*$
via the isomorphism $XZ/Z\cong (XZ)^*/Z^*$,
and since $XZ/Z\cong X$ is a $\pi'$-group and $Z^*$ is central in $(XZ)^*$,
there exists a unique $\pi'$-subgroup $X^*$ of $G^*$ such that $(XZ)^*=X^*\times Z^*$.
It follows that $X\mapsto X^*$ defines a surjective map from the set of $\pi'$-subgroups of $G$
onto that of $G^*$.

For (a), we conclude by Lemma \ref{6.21}(b) that $|\I(G|Q,\lambda)|=|\I(G^*|Q^*,\lambda^*)|$ and $$|\I(\NN_G(QZ)|Q,\lambda)|=|\I(\NN_{G^*}(Q^*Z^*)|Q^*,\lambda^*)|.$$
So, it suffices to prove (a) in $G^*$.

For (b), if $Q<X$, then $QZ<XZ$, which implies that $Q^*Z^*=(QZ)^*<(XZ)^*=X^*Z^*$ and hence $Q^*<X^*$.
Using the natural isomorphism $X\cong XZ/Z$, it is easy to see that  $Q<\NN_X(Q)$ if and only if $QZ/Z<\NN_{XZ/Z}(QZ/Z)$,
and this is equivalent to saying that
$$(QZ)^*/Z^*<\NN_{(XZ)^*/Z^*}((QZ)^*/Z^*)=\NN_{X^*Z^*/Z^*}(Q^*Z^*/Z^*),$$
which again happens if and only if $Q^*<\NN_{X^*}(Q^*)$.
So, we may also work in $G^*$ to establish (b).
To complete the proof, therefore, we may assume without loss that $Z\le {\bf Z}(G)$.

\smallskip
\emph{Step 2. We may assume that ${\bf O}_{\pi'}(G)=1$.}

Let $L={\bf O}_{\pi'}(G)$ and write $\bar G=G/L$. Suppose first that $L\not\le Q$.
Then Lemma \ref{3.10}(b) tells us that $\I(G|Q,\lambda)$ is empty, and (a) follows.
For (b), we need to prove that $\I(N|Q,\lambda)$ is also empty.
To do this, write $N=\NN_G(Q)$, and since $L\cap N \le {\bf O}_{\pi'}(N)$, it suffices to show that $L\cap N\not\le Q$.
Note that $Q<LQ$, and by the assumption on $Q$, we have
$$Q<\NN_{LQ}(Q)=\NN_{L}(Q)Q=(L\cap N)Q$$
and thus $L\cap N\not\le Q$.
This proves the theorem in the case where $L\not\le Q$.

Now suppose that $L\le Q$. By Lemma \ref{6.31}, we can deduce that
       $$|\I(G|Q,\lambda)|=|\I(\bar G|\bar Q,\bar\lambda)|
       \quad\text{and}\quad |\I(\NN_G(Q)|Q,\lambda)|=|\I(\NN_{\bar G}(\bar Q)|\bar Q,\bar\lambda)|.$$
So, it is no loss to work in $\bar G=G/L$, and thus we may assume that $L=1$
(note that $\OO_{\pi'}(G/L)=1$).

\smallskip
\emph{Step 3. We may assume that $|G:K|<|G:Z|$, where $K=\OO_\pi(G)$.}

Note that $Z\le K$, so we need to show that $Z<K$.
Suppose that $Z=K$, so that $K$ is central in $G$ by Step 1.
Since $\OO_{\pi'}(G)=1$ by Step 2, the Hall-Higman Lemma 1.2.3 (see Theorem 3.21 of \cite{I2008} for example)
implies that $G=\CC_G(K)\le K$. It follows that $G$ is a $\pi$-group, and
in this case, the result is trivial. Thus we may assume that $Z<K$.

\smallskip
\emph{Step 4. Complete the proof.}

Let $\mathscr{A}$ be a set of representatives of $\NN_G(Q)$-orbits in the set of $Q$-invariant characters in $\Irr(K|\lambda)$.
Since $\lambda$ is $G$-invariant, it follows by Corollary \ref{cor} that
   $$|\I(G|Q,\lambda)|=\sum_{\tau\in\mathscr{A}}
|\I(G_\tau|Q,\tau)|.$$
Furthermore, let $\mathscr{A}^*$ be the image of $\mathscr{A}$
under the Glauberman-Isaacs correspondence from $\Irr_Q(K)$ to $\Irr({\bf C}_K(Q))$.
Then $\mathscr{A}^*$ is clearly a set of representatives of $\NN_G(Q)$-orbits in
$\Irr({\bf C}_K(Q)|\lambda)$.
As before, we have
$$|\I(\NN_G(Q)|Q,\lambda)|=\sum_{\tau\in\mathscr{A}}|\I(\NN_G(Q)_\tau|Q,\tau^*)|.$$
Note that $K\NN_G(Q)_\tau=\NN_{G_\tau}(KQ)$,
and by Theorem \ref{basic}, we have
$$|\I(\NN_{G_\tau}(KQ)|Q,\tau)|=|\I(\NN_G(Q)_\tau|Q,\tau^*)|,
$$
for all $\tau\in\mathscr A$. To complete the proof, therefore, it suffices to show that
\[ |\I(G_\tau|Q,\tau)|\le |\I(\NN_{G_\tau}(KQ)|Q,\tau)| \]
with equality if $Q$ has the property described in (b).
This follows by the inductive hypothesis applied in $G_\tau$ since then $|G_\tau:K|<|G:Z|$ by Step 4,
and the proof is complete.
\end{proof}

The following is Corollary \ref{cor:C} in the introduction, which we restate here for convenience.
\begin{cor}
Let $G$ be a $\pi$-separable group, and let $Q$ be a Hall $\pi$-complement of $G$. Then
\begin{enumerate}[label={\upshape(\alph*)}]
\item $|\I(G|Q)|=|\I(\NN_G(Q)|Q)|$ and
\item $|\XX_\pi(G)|=|\Irr(\NN_G(Q)/Q)|$.
\end{enumerate}
\end{cor}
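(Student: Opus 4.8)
The plan is to derive this Corollary directly from Theorem B by taking $Q$ to be a Hall $\pi$-complement of $G$, and then to translate part (a) into the language of $\pi$-special characters for part (b). The key observation for part (a) is that the hypothesis of Theorem B is automatically satisfied when $Q$ is a Hall $\pi$-complement. Indeed, if $X$ is any $\pi'$-subgroup of $G$ properly containing $Q$, then $Q$ is already a maximal $\pi'$-subgroup (being a Hall $\pi$-complement), so no such $X$ exists; the condition ``$Q<\NN_X(Q)$ for every $\pi'$-subgroup $X$ properly containing $Q$'' holds vacuously. Thus Theorem B applies and gives $|\I(G|Q)|=|\I(\NN_G(Q)|Q)|$ immediately, which is part (a).

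For part (b), the strategy is to identify both sides of the equation in (a) with the quantities appearing in (b). On the left, since $Q$ is a Hall $\pi$-complement of $G$, Lemma \ref{5.18} tells us that $\varphi\in\I(G|Q)$ if and only if $\varphi(1)$ is a $\pi$-number. By Lemma \ref{3.14}, the map $\chi\mapsto\chi^0$ is a bijection from $\XX_\pi(G)$ onto the set of irreducible $\pi$-partial characters of $G$ of $\pi$-degree, so $|\I(G|Q)|=|\XX_\pi(G)|$. On the right, I want to show $|\I(\NN_G(Q)|Q)|=|\Irr(\NN_G(Q)/Q)|$. For this, note that $Q$ is a normal Hall $\pi$-complement of $\NN_G(Q)$, so every $\tau\in\Irr(\NN_G(Q)/Q)$ trivially has $\pi'$-defect zero relative to the quotient (its degree is a $\pi$-number since $\NN_G(Q)/Q$ is a $\pi$-group, and $|\NN_G(Q)/Q:1|_{\pi'}=1$). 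Hence $\dz(\NN_G(Q)/Q)=\Irr(\NN_G(Q)/Q)$, and Lemma \ref{6.28} gives the canonical bijection $\dz(\NN_G(Q)/Q)\to\I(\NN_G(Q)|Q)$, so $|\I(\NN_G(Q)|Q)|=|\Irr(\NN_G(Q)/Q)|$.

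Combining these identifications with part (a) yields
\[
|\XX_\pi(G)|=|\I(G|Q)|=|\I(\NN_G(Q)|Q)|=|\Irr(\NN_G(Q)/Q)|,
\]
which is precisely part (b). I expect the routine steps to be the two degree bookkeeping arguments: verifying that $\pi'$-defect zero in the $\pi$-group $\NN_G(Q)/Q$ is automatic, and checking that the Hall $\pi$-complement condition forces vertex membership to coincide with having $\pi$-degree via Lemma \ref{5.18}. The only point requiring genuine care is part (b)'s reliance on the fact that $\I(G|Q)$ with $Q$ a Hall $\pi$-complement really does count $\pi$-special characters; this is where Lemmas \ref{3.14} and \ref{5.18} must be invoked in tandem, and it is the main conceptual (rather than computational) step. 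Everything else is a direct application of Theorem B together with the cited preliminary lemmas.
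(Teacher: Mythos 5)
Your proposal is correct, and its overall architecture matches the paper's: part (a) follows from Theorem~\ref{thm:B} (whose hypothesis indeed holds vacuously, since a Hall $\pi$-complement is a maximal $\pi'$-subgroup of a $\pi$-separable group, though the paper does not even spell this out), and part (b) is obtained by identifying the two sides of (a) with $|\XX_\pi(G)|$ and $|\Irr(\NN_G(Q)/Q)|$ respectively. The identification $|\I(G|Q)|=|\XX_\pi(G)|$ via Lemmas~\ref{3.14} and~\ref{5.18} is exactly the paper's step. Where you diverge is in the second identification: you apply Lemma~\ref{6.28} and observe that $\dz(\NN_G(Q)/Q)=\Irr(\NN_G(Q)/Q)$ because $\NN_G(Q)/Q$ is a $\pi$-group, whereas the paper instead applies Lemmas~\ref{3.14} and~\ref{5.18} a second time inside $\NN_G(Q)$ to get $|\I(\NN_G(Q)|Q)|=|\XX_\pi(\NN_G(Q))|$, and then identifies $\XX_\pi(\NN_G(Q))$ with $\XX_\pi(\NN_G(Q)/Q)=\Irr(\NN_G(Q)/Q)$ using the fact that normal $\pi'$-subgroups lie in the kernel of every $\pi$-special character. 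Both routes are short and correct; yours has the mild advantage of making the connection with $\pi'$-weights explicit (Lemma~\ref{6.28} is precisely the weight-counting lemma), while the paper's keeps everything phrased in terms of $\pi$-special characters, which is the form in which Wolf's original Corollary~1.16 is stated.
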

\begin{proof}
 (a) is clear by Theorem \ref{thm:B}.
To prove (b), we use Lemmas \ref{3.14} and \ref{5.18} to deduce that
$$|\XX_\pi(G)|=|\I(G|Q)|\;\text{and}\;|\XX_\pi(\NN_G(Q))|=|\I(\NN_G(Q)|Q)|,$$
and by (a), we obtain $|\XX_\pi(G)|=|\XX_\pi(\NN_G(Q))|$.
By definition, it is easy to see that all normal $\pi'$-subgroups of a $\pi$-separable group are contained
in the kernel of each $\pi$-special character,
and thus we may identify $\XX_\pi(\NN_G(Q))$ with $\XX_\pi(\NN_G(Q)/Q)$.
But $\NN_G(Q)/Q$ is a $\pi$-group, so all irreducible characters of $\NN_G(Q)/Q$ are $\pi$-special,
that is, $\Irr(\NN_G(Q)/Q)=\XX_\pi(\NN_G(Q)/Q)$.
It follows that $|\XX_\pi(G)|=|\Irr(\NN_G(Q)/Q)|$, as desired.
\end{proof}

\section*{Acknowledgements}
This work was supported by the NSF of China (12171289) and
by Fundamental Research Programs of Shanxi Province (20210302123429 and 20210302124077).

\bibliographystyle{amsplain}

\end{document}